\newtheorem{theorem}{Theorem}[section]
\newtheorem{lemma}[theorem]{Lemma}
\theoremstyle{definition}
\newtheorem{definition}[theorem]{Definition}
\newtheorem{remark}{Remark}
\newcommand{\HH}{\mathit{HH}}
\newcommand{\rr}{\mathbf{R}}
\newcommand{\eff}{{\mathrm{eff}}}
\newcommand{\di}[1]{{\mathrm{div}}\big( #1\big)}
\newcommand{\ve}{\varepsilon}
\newcommand{\ttheta}{\theta_{\ve^2}\big(\frac{x}{\varepsilon}\big)}
\newcommand{\be}{\begin{equation}}
\newcommand{\ee}{\end{equation}}
\newcommand{\ba}{\begin{array}}
\newcommand{\ea}{\end{array}}
\title[Cable equation for myelinated axons]{Derivation of cable equation by multiscale analysis for a model of myelinated axons}
\keywords{Hodgkin-Huxley model, nonlinear cable equation, cellular electrophysiology, multiscale modeling, homogenization}
\g@addto@macro{\endabstract}{\@setabstract}
\newcommand{\authorfootnotes}{\renewcommand\thefootnote{\@fnsymbol\c@footnote}}%
\begin{document}
\maketitle

\begin{center}

\normalsize
\authorfootnotes
Carlos Jerez-Hanckes\textsuperscript{1}, Irina Pettersson\textsuperscript{2}, and Volodymyr Rybalko \textsuperscript{3}
\par \bigskip

\textsuperscript{1} Pontificia Universidad Cat{\'o}lica de Chile, Chile \par
\textsuperscript{2}University of G{\"a}vle, Sweden \par
\textsuperscript{3} Institute for Low Temperature Physics and Engineering, Ukraine \par
\bigskip

\today
\end{center}

%%%%%%%%%%%%%%%%%%%%%%%%%%%%%%%%%%%%%%%%%%%%%%%%%%%%%%%%%%

\begin{abstract}
The paper concerns the multiscale modeling of a myelinated axon. Taking into account the microstructure with alternating myelinated parts and nodes Ranvier,  we derive a nonlinear cable equation describing the potential propagation along the axon. We assume that the myelin is not a perfect insulator, and assign a low (asymptotically vanishing) conductivity in the myelin. Compared with the case when myelin is assumed to have zero conductivity, an additional potential arises in the limit equation. The coefficient in front of the effective potential contains information about the geometry of the myelinated parts.    
\end{abstract}

\tableofcontents

\section{Introduction}
%%%%%%%%%%%%%%%%%

A nerve impulse is the movement of action potential along a nerve fiber in response to a stimulus, such as touch, pain, heat or cold. It is the way a nerve cell communicates with another cell and makes it act. For example, a signal from the nerve cell might make a muscle cell to contract.  Any disorder in the nervous system can result in a range of symptoms, which include chronic pain, poor coordination, and loss of sensation. Electrical stimulation helps to create neuron activity and to overcome the lost functions of the patients. For example, it is documented that electrical stimulation leads to augmentation of myelin development \cite{li2017electrical} and helps, for example, people with multiple sclerosis and foot drop walk more normally \cite{es-ms}.

The process of excitability of nerve fibers and a mathematical model for the electric current across the axon membrane was presented in the famous work of Hodgkin and Huxley \cite{HH-52}. For their pioneering work in neurophysiology in 1963 the Nobel Prize in Physiology or Medicine was awarded jointly to Sir John Carew Eccles, Alan Lloyd Hodgkin and Andrew Fielding Huxley. 
A typical nerve contains, however, several grouped fascicles, each of them containing many axons. The jump of the potential across the membrane of each individual axon can be modelled in the framework of the Hodgkin-Huxley model, but the alternating myelinated and unmyelinated parts of the membrane present an obvious problem for those attempting to describe its macroscopic response to the electrical stimulation.  In order to model and simulate the respons of biological tissues to electrical stimulation one needs to know how signals propagate along single neurons and, as the next step, how they influence each other in a bundle of axons. 

The signal propagation along a neuron is modelled by a cable equation, usually derived by modeling dendrites and axons as cylinders composed of segments with capacitances and resistances combined in parallel (\cite{HH-52},  \cite{Rall-69}, \cite{Ra-90},  \cite{Ba}, \cite{meffin2014modelling}). The coefficients in such equation depend on the membrane resistances and capacitance of Ranvier nodes and internodes (myelinated parts), as well as on the length of nodes and internodes. There are several works where formal two-scale expansion is applied to a one-dimensional model in order to show that a myelinated neuron can be approximated by a homogeneous cable (\cite{Ba}, \cite{MeLa-08}), but these results do not take into account the microstructure of the fibers, and the geometry of the myelin sheath in particular, as well as they do not justify the formal approximation.

There are many results where the homogenization is applied to cardiac tissue: \cite{NKr}, \cite{PiSa}, \cite{FrPaSc}, \cite{Am}
Cardiac muscle is however fundamentally different  from nerve tissue because the heart is a syncytium. The intracellular space of each cardiac cell is coupled to its neighbor's through intercellular channels. Thus, current can  flow  from the interior of one cell to the interior of another without crossing a cell membrane. 

The present work presents a rigorous derivation of a nonlinear cable equation for signal propagation along a myelinated neuron. We assume that the conductivity of the myelin sheath is small, but not zero, that leads to the appearance of a potential in the limit equation. The potential depends on the geometry of the myelin sheath.

%%%%%%%%%%%%%%%%%

The paper is organized as follows. In Section \ref{sec:setup} we formulate the problem and present the main result in Theorem \ref{th:main}. The rest of the paper is devoted to the proof of Theorem \ref{th:main}. In Sections \ref{sec:apriori} we derive a priori estimates for the potential $u_\ve$ and its jump across the Ranvier nodes. In Section \ref{sec:aux} we construct an auxiliary test function which is used when passing to the limit in Section \ref{sec:pass-to-limit}.

\section{Problem setup}
\label{sec:setup}

\noindent
Let us consider a myelinated axon sparsely suspended in an extracellular medium. 
We assume that the axon has a periodic structure, containing myelinated and unmyelinated parts (nodes of Ranvier) as illustrated on Figure \ref{fig:1}.

\begin{figure}[h]
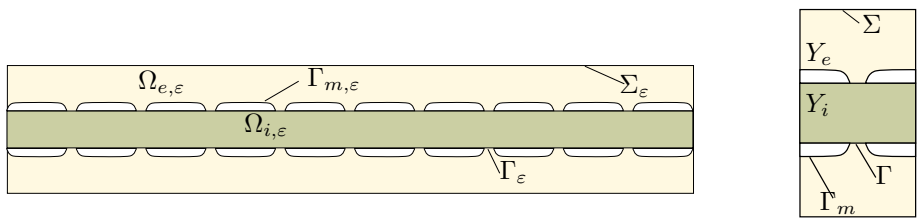

\begin{minipage}[b]{.48\textwidth}
\centering{
\def\svgwidth{1.55\textwidth}
\input{drawing3.pdf_tex}
%\caption{Simplified geometry of the cross-section of a myelinated axon}
%\label{fig:1}
}
\end{minipage}
\hskip 0.3cm
\begin{minipage}[b]{.48\textwidth}
\flushright{
\def\svgwidth{.35\textwidth}
\input{drawing2.pdf_tex}
%\caption{Periodicity cell $Y$}
}
\end{minipage}
\caption{Simplified geometry of the cross-section of a myelinated axon and the periodicity cell $Y$.}
\label{fig:1}
\end{figure}

A periodicity cell will be denoted by $Y=(-\frac{1}{2}, \frac{1}{2})\times D_{R_0}$ where $D_{R_0}$ is the disk in $\rr^2$ with the radius $R_0$  (see Figure \ref{fig:1}). $Y$ consists of an intracellular part $Y_i=(0,1)\times D_{r_0}$, an extracellular medium $Y_e$, and the myelin sheath $Y_m$ as shown in Fugure \ref{fig:1} (a detailed description of the domain is given in Section \ref{sec:aux}). We denote by $\Gamma_{mi}$ ($\Gamma_{me}$) the interface between $Y_m$ and $Y_i$ ($Y_e$). $\Gamma_m=\Gamma_{mi}\cup \Gamma_{me}$ is the myelinated part of the interface, and $\Gamma$ is the unmyelinated one (surface of a Ranvier node).  The lateral boundary of $Y$ is denoted by $\Sigma$ (we will assume periodicity in $y_1$). We assume that the boundary of the myelin part $\Gamma_m$ is Lipschitz continuous. 
The periodicity cell is then scaled by a small parameter $\ve>0$ and translated along the $x_1$-axis to form a thin periodic cylinder (thickness of order $\ve$) suspended in the extracellular medium (thickness of order $\ve$) with alternating myelinated and unmyelinated parts on the lateral boundary. 

In what follows we denote $x=(x_1, x_2, x_3)=(x_1, x') $ points in $\mathbf R^3$.
 Let
$\Omega_{i,\ve} = (0,L) \times (\ve D_{r_0})$ denote the intracellular domain,
$\Omega_{e,\ve}$ denote the extracellular domain, $\Omega_{m,\ve}$ denote the myelin part, $\Gamma_\ve$ be the unmyelinated part of the boundary, and $\Gamma_{m,\ve}$ be the myelinated one. For simplicity $L$ consists of integer number of periods.

The whole domain $\Omega_\ve = (0,L)\times \ve (-\frac{1}{2}, \frac{1}{2})^2$ is the union of the extracellular, intracellular and myelin domains, and the Ranvier nodes: $\Omega_\ve = \Omega_{i,\ve}\cup \Omega_{e,\ve}\cup \Omega_{m,\ve} \cup \Gamma_\ve$. The lateral part of $\Omega_\ve$ is denoted by $\Sigma_\ve$.

Let $u_\ve^{i}, u_\ve^{e}, u_\ve^m$ denote the electrical potential in the intracellular, extracellular and myelin domains, respectively. 
We assume that the electric potential satisfies homogeneous Neuman boundary conditions on the lateral boundary $\Sigma_\ve$ and homogeneous Dirichlet boundary conditions on the bases $\Gamma_0=\{0\}\times (-\frac{1}{2}, \frac{1}{2})$ and $\Gamma_L=\{L\}\times (-\frac{1}{2}, \frac{1}{2})$.

The transmembrane potential is the jump of the potential through the axon's membrane. We denote it by $[u_\ve] = u_\ve^i - u_\ve^e$. 

Let the conductivity be a piecewise constant function 
\begin{align*}
\sigma_\ve = \left\{
\begin{array}{l}
\sigma_e \quad \mbox{in}\,\, \Omega_{e,\ve},\\
\sigma_i \quad \mbox{in}\,\, \Omega_{i,\ve},\\
\ve^4 \quad \mbox{in}\,\, \Omega_{m,\ve},
\end{array}
\right.
\end{align*}
and $u_\ve$ denote the potential $u_\ve = u_\ve^l$ in $\Omega_{i,\ve}$, $l=i, e, m$.

The potential distribution in $\Omega_\ve$ is described by the following system of equations:

\begin{align}
\label{eq:1-1}
-&\di{\sigma_\ve \Delta u_\ve}  =0, \, &(t,x) &\in (0,T)\times \Omega_\ve\setminus \Gamma_\ve,\\
\label{eq:1-2}
&\sigma_e \nabla u_\ve^e\cdot \nu = -\sigma_i \nabla u_\ve^i\cdot \nu, \, &(t,x) &\in (0,T)\times (\Gamma_\ve\cup \Gamma_{m,\ve}),\\
\label{eq:1-3}
&\ve (c_m \partial_t[u_\ve] + I_{ion}([u_\ve], g_\ve)) = - \sigma_i \nabla u_\ve^i\cdot \nu, \, &(t,x) & \in(0,T)\times  \Gamma_\ve,
\\
\label{eq:1-4}
&\partial_t {g}_\ve  = \HH([u_\ve], g_\ve), \, &(t,x) & \in(0,T)\times  \Gamma_\ve,\\
\label{eq:1-5}
&[u_\ve](x,0) =0, \,\,g_\ve(x,0)=G_0(x_1), \, &x& \in \Gamma_\ve,
\\
\label{eq:1-6}
&\nabla u_\ve^e\cdot \nu   = 0, \, &(t,x)& \in (0,T)\times \Sigma_\ve,
\\
\label{eq:1-7}
&u_\ve  =  0, \, &(t,x)& \in (0,T)\times (\Gamma_0\cup \Gamma_L).
\end{align}
We study the asymptotic behavior of $u_\ve$, as $\ve \to 0$, and derive a one-dimensional effective equation describing the action potential propagation along the axon.

On the Ranvier nodes we assume the continuity of currents (\ref{eq:1-2}), and the Hodgkin-Huxley dynamics for the transmembrane potential (\ref{eq:1-3}). Following the Hodgkin-Huxley model, the applied current through the membrane is a sum of the capacitive current $c_m \partial_t[u_\ve]$, where $c_m$ is the membrane capacitance per unit area, and the ionic current $I_{ion}([u_\ve], g_\ve)$ through the ion channels. 
%%%%%%
\begin{comment}
The function $I_{ion}(v, g)$ is linear with respect to $v$ and has the form
\begin{align}
\label{def:I-ion}
I_{ion}(v, g) = \sum\limits_{j=1}^m H_j(g_{j})(v-v_{r,j}),
\end{align}
where $g_{,j}$ is the $j$th component of $g$, $v_{r,j}$ is the $j$the component of the resting potential $v_r$, and $H_j$ is positive, bounded, and Lipschitz continuous
\begin{align*}
|H_j(g_1)-H_j(g_2)| \le L_1 |g_1 - g_2|.
\end{align*}
\end{comment}
%%%%%%%%%%
In the classical Hodgkin-Huxley model there are three types of channel: a sodium channel (Na), a potassium channel (K), and a leakage channel. The conductances of the various ionic fluxes are regulated by the vector of gating variables $g_\ve$.

We assume the homogeneous Dirichlet  boundary condition for $u_\ve^e$ and for $u_\ve^i$ on the bases of the domain, when $x_1=0$ and $x_1=L$; on the lateral boundary of $\Omega_\ve$ we assume the homogeneous Neumann boundary condition; $\nu$ is the unit normal exterior to $\Omega_{e,\ve}$ on $\Sigma$ and $\Gamma_{me}$, and exterior to $\Omega_{i,\ve}$ on $\Gamma_\ve$ and $\Gamma_{mi}$.  Note that $\nu$ on $\Gamma$ is orthogonal to the $x_1$-axes, that is its first component is zero.

We assume that
\begin{itemize}
\item[(H1)]
The function $I_{ion}(v, g)$ is linear w.r.t $v$ and has a form
\begin{align*}
I_{ion}(v, g) = \sum\limits_{j=1}^m H_j(g_{j})(v-v_{r,j}),
\end{align*}
where $g_{,j}$ is the $j$th component of $g$, $v_{r,j}$ is the $j$the component of the resting potential $v_r$, and $H_j$ is positive, bounded, and Lipschitz continuous
\begin{align*}
|H_j(g_1)-H_j(g_2)| \le L_1 |g_1 - g_2|.
\end{align*}
The constant $v_r$ is the reference constant voltage, and $g_\ve$ is a gate variable vector with positive components $0< (g_\ve)_j<1, \,\,j=\overline{1,m}$.

\item[(H2)]
The vector function $\HH(g, v)=F(v) - \alpha g$, where $F$ is Lipschitz continuos
\begin{align*}
|F(v_1)-F(v_2)| \le L_2 |v_1-v_2|.
\end{align*}
\item[(H3)]
$G_0 \in C(0,L)^m$ and takes values between $0$ and $1$ (as the corresponding $g_\ve$). 
\end{itemize}

\begin{remark}
When measuring the respons of a neuron to the external stimulation, one wants to exclude appearance of the action potential in the absence of the external stimulation. To this end one can control the initial state of ionic channels (initial condition for the gate variables) in order to guarantee zero potential at the initial moment. This motivates the choice of zero initial condition for the transmembrane potential $v_\ve$. 
\end{remark}

We will use test function $\phi\in L^\infty(0,T; H^1(\Omega_\ve\setminus \Gamma_\ve))$, $\partial_t \phi \in L^2(0,T; L^2(\Gamma_\ve))$ such that $\phi=0$ for $x_1=0$ and $x_1= L$. The jump of $\phi$ across the Ranvier nodes is denoted by $[\phi]$, $[\phi]=(\phi^i-\phi^e)\Big|_{\Gamma_\ve}$.

The weak formulation corresponding to (\ref{eq:1-1}-\ref{eq:1-7}) is given by:  Find\\
\begin{align*}
u_\ve \in L^\infty(0,T; H^1(\Omega_\ve\setminus \Gamma_\ve)), 
\quad
\partial_t [u_\ve] \in L^2(0,T; L^2(\Gamma_\ve))
\end{align*}
such that $u_\ve =0$ for $x_1 =0$ and $x_1=L$, for any test functions $\phi\in L^\infty(0,T; H^1(\Omega_\ve\setminus \Gamma_\ve))$, $\phi=0$ for $x_1 =0$ and $x_1=L$, and for almost all $t \in (0,T)$
\begin{align}
\label{eq:weak-original-0}
\ve\int_{\Gamma_\ve} c_m \partial_t [u_\ve] [\phi] \, ds
+ \int_{\Omega_\ve\setminus \Gamma_\ve} \sigma_\ve \nabla u_\ve\cdot \nabla \phi \, dx
+ \ve \int_{\Gamma_\ve} I_{ion}([u_\ve], g_\ve)[\phi]\, ds = 0.
\end{align} 
The vector of gate variables $g_{\ve}$ solves the following ordinary differential equation
\begin{align*}
\partial_t g_{\ve} = \HH([u_\ve], g_\ve), \,\, g_\ve(0,x)=G_0(x_1).
\end{align*}
Since $\HH$ is linear with respect to $g_\ve$, we can solve the last ODE and obtain $g_\ve$ as a function (integral functional) of the jump $[u_\ve]$:
\begin{align*}
\langle g_\ve,[u_\ve]\rangle = e^{-\alpha t}\big(G_0(x) + \int_0^t F([u_\ve](\tau, x)) e^{\alpha \tau} \, d\tau\big).
\end{align*}
Substituting this expression into (\ref{eq:weak-original-0}) we obtain the weak formulation of (\ref{eq:1-1})-(\ref{eq:1-7}) in terms of the potential $u_\ve$ and its jump $v_\ve = [u_\ve]$ across $\Gamma_\ve$:
\begin{align}
\label{eq:weak-original}
\ve\int_{\Gamma_\ve} c_m \partial_t v_\ve [\phi] \, ds
+ \int_{\Omega_\ve\setminus \Gamma_\ve} \sigma_\ve \nabla u_\ve\cdot \nabla \phi \, dx
+ \ve \int_{\Gamma_\ve} I_{ion}(v_\ve, \langle g_\ve,v_\ve \rangle)[\phi]\, ds = 0.
\end{align} 

%%%%%%%%%%%%%%%%%%%%%%%%%%%%%%%%%%%%%%%%%%%%%%%%%%
\begin{comment}
\begin{lemma}[Convergence result]
\label{lm:convergence}
For a solution $(u_\ve, g_\ve)$ of (\ref{eq:1-1}-\ref{eq:1-7}) the following convergence result holds:
\begin{itemize}
\item[(i)] Strong $L^2$-convergence of the potential $u_\ve$:
$$\displaystyle
\ve^{-2}\int_0^T \int_{\Omega_\ve\setminus \Gamma_\ve} |u_\ve(t,x) - u_0(t,x_1)|^2 dx dt \to 0, \quad \ve \to 0.$$
\item[(ii)] Strong $L^2$-convergence of the transmembrane potential $[u_\ve]$:
$$
\displaystyle
\ve^{-1}\int_0^T \int_{\Gamma_\ve} |[u_\ve](t,x) - v_0(t,x_1)|^2 dx dt \to 0, \quad \ve \to 0.
$$
\item[(iii)] Weak $L^2$-convergence of the gradients $\nabla u_\ve$: 
$$
\displaystyle
\ve^{-2}\int_0^T \int_{\Omega_{l,\ve}} (\nabla u_\ve^l(t,x) - \nabla u_0^l(t,x_1)) \, \phi \, dx dt \to 0, \quad \ve \to 0 \quad (l=i, e), 
$$
for any $\phi \in C^\infty(\mathbf R \times \mathbf R^3)$.

\end{itemize}
\end{lemma}
\end{comment}
%%%%%%%%%%%%%%%%%%%%%%%%%%%%%%%%%%%%%%%%%%%%%%%%%%%%%%%
The main result of the paper is given in the following theorem.

\begin{theorem}
\label{th:main}
The transmembrane potential $[u_\ve]$ and the vector of gating variables $g_\ve$ converge uniformly with respect to $t$ in $C(0,T;L^2(\Gamma_\ve))$ to the unique solution $(v_0, g_0)$ of the following one-dimensional problem:
\begin{align}
& c_m \partial_t v_0 + I_{ion}(v_0, g_0) + \overline \Lambda \, v_0  = a^\eff \partial_{x_1x_1}^2 v_0, &(t,x_1) &\in (0,T)\times (0,L),\nonumber\\
\label{eq:hom-prob}
&\partial_t g_0  = \HH(v_0, g_0),  &(t,x_1) &\in (0,T)\times (0,L),\\
&v_0(t,0) = v_0(t,L) =0,&t &\in (0,T),\nonumber\\
&v_0(0,x_1) =0, \,\, g_0(0,x_1)=G_0(x_1),  \hskip -0.5cm & x_1 &\in (0,L). \nonumber
\end{align}
The effective coefficient $a^\eff$ is given by
\begin{align}
\label{eq:eff-coef}
a^\eff = \frac{1}{|\Gamma| |Y|}\left(\big( \sigma^e \int_{Y^e} (\partial_{y_1} N +1)dx\big)^{-1} + \big(\sigma^i |Y^i|\big)^{-1}\right)^{-1},
\end{align}
where the $1$-periodic in $y_1$ function $N$ solves an auxiliary cell problem
\begin{align}
-&\Delta N(y) = 0, &y&\in Y_e, \nonumber\\
&\nabla N \cdot \nu = -\nu_1, \hskip -2.5cm&y&\in \Gamma_m,\label{eq:cell-prob}\\
&\nabla N\cdot \nu = 0,  &y& \in  \Gamma\cup \Sigma,\nonumber\\
&N(y_1,y') \,\,\,\mbox{is periodic in}\,\,\, y_1.\hskip -2.5cm&& \nonumber
\end{align}
The constant $\overline \Lambda$ depends on the geometry of the myelin sheath (see Figure \ref{fig:1-0}) and the conductivities, and is given by
\begin{equation}
\label{eq:Lambda}
\overline\Lambda= \frac{1}{b-a}
\left(\left(\frac{\varphi_A}{\sigma_e(\pi-\varphi_A)}+\frac{\varphi_A}{\sigma_i \pi}\right)^{-1/2}+
\left(\frac{\varphi_B}{\sigma_e(\pi-\varphi_B)}+\frac{\varphi_B}{\sigma_i \pi}\right)^{-1/2}\right).
\end{equation}

\end{theorem}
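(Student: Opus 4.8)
The plan is to follow the standard energy-method route for homogenization of a thin periodic domain, with the two novel features—the effective potential $\overline\Lambda$ and the effective diffusion $a^\eff$—arising respectively from the myelin layer (of conductivity $\ve^4$) and from the two-phase intracellular/extracellular corrector problem. First I would establish a priori estimates: testing the weak formulation \eqref{eq:weak-original} with $\phi=u_\ve$ and using (H1)--(H3) together with Gr\"onwall's inequality yields bounds of the form $\|\nabla u_\ve\|_{L^2(\Omega_\ve\setminus\Gamma_\ve)}^2\lesssim\ve$ and $\ve\|v_\ve\|_{L^2(\Gamma_\ve)}^2\lesssim\ve$, uniformly in $t$; these are the estimates of Section~\ref{sec:apriori}. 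The key subtlety here is the trace/Poincar\'e inequality on the thin cylinder with a jump across $\Gamma_\ve$, which controls $\ve^{-1}\int_{\Gamma_\ve}|v_\ve|^2$ and $\ve^{-2}\int_{\Omega_\ve\setminus\Gamma_\ve}|u_\ve-\langle u_\ve\rangle|^2$ by the rescaled Dirichlet energy; one then extracts (along a subsequence) limits $v_0(t,x_1)$ and $u_0(t,x_1)$ in the sense of the two-scale/dilation convergence adapted to thin structures.

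Next I would identify the limit equation by passing to the limit in \eqref{eq:weak-original} with a suitably oscillating test function. The decisive ingredient is the auxiliary test function built in Section~\ref{sec:aux}: one takes $\phi(t,x)=\psi(t,x_1)+\ve\,\psi'(t,x_1)\,\widetilde N(x/\ve)$ in the intracellular and extracellular phases, where $\widetilde N$ is patched together from the cell corrector $N$ solving \eqref{eq:cell-prob} and its intracellular counterpart, while inside the myelin $\Omega_{m,\ve}$ one inserts a boundary-layer profile interpolating the traces on $\Gamma_{mi}$ and $\Gamma_{me}$. Plugging this in, the term $\int_{\Omega_\ve\setminus\Gamma_\ve}\sigma_\ve\nabla u_\ve\cdot\nabla\phi\,dx$ splits into a contribution from $\Omega_{i,\ve}\cup\Omega_{e,\ve}$—which, after using $N$ and dividing by $\ve$, converges to the harmonic-mean combination giving $a^\eff\,\partial_{x_1}v_0\,\partial_{x_1}\psi$ with the formula \eqref{eq:eff-coef}—and a contribution from $\Omega_{m,\ve}$ of size $\ve^4\cdot\ve^{-2}\cdot(\text{area}\sim\ve^2)$, i.e.\ formally order $\ve$; the point is that the boundary-layer profile across the myelin, combined with the factor $\ve^4$ and the $O(\ve^{-2})$ transverse gradient, contributes at the same order as the $\ve\int_{\Gamma_\ve}$ surface terms and produces exactly $\overline\Lambda\,v_0$, with the geometric constant \eqref{eq:Lambda} obtained by solving the one-dimensional (radial/angular) reduction of the Laplace equation in the thin myelin wedge between angles $\varphi_A$ and $\varphi_B$ over the internodal length $b-a$. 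The surface integrals $\ve\int_{\Gamma_\ve}c_m\partial_t v_\ve[\phi]$ and $\ve\int_{\Gamma_\ve}I_{ion}(v_\ve,\langle g_\ve,v_\ve\rangle)[\phi]$ converge, using the strong $L^2$-convergence of $v_\ve$ and Lipschitz continuity of $I_{ion}$ and $F$, to $c_m\partial_t v_0\,\psi\,|\Gamma|$ and $I_{ion}(v_0,g_0)\,\psi\,|\Gamma|$; dividing through by $\ve|\Gamma|$ (and $|Y|$ where the volume terms appear) gives the stated cable equation \eqref{eq:hom-prob}.

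The main obstacle, and where I would spend the most care, is the rigorous treatment of the myelin contribution: one must show that the $O(\ve^4)$-conductivity layer, which one might naively expect to disappear, in fact leaves a nontrivial zeroth-order term $\overline\Lambda v_0$ in the limit. This requires (i) a sharp trace estimate linking $u_\ve^i$ and $u_\ve^e$ across $\Omega_{m,\ve}$—essentially that the energy $\ve^4\int_{\Omega_{m,\ve}}|\nabla u_\ve|^2$ controls a weighted $L^2$-norm of $[u_\ve]_{\Gamma_{m,\ve}}:=u_\ve^i-u_\ve^e$ on the myelin interface via the explicit one-dimensional minimizer of the Dirichlet integral in the thin wedge, which is where the factors $\varphi_A/(\sigma_e(\pi-\varphi_A))$, etc., and the length $b-a$ enter; and (ii) showing that this interface jump is, to leading order, the same constant $v_0(t,x_1)$ as the Ranvier-node jump, so that the myelin energy asymptotically equals $\tfrac{\ve}{2}\overline\Lambda\int v_0^2$ up to a factor and feeds into the equation as claimed. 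Once the corrector problems \eqref{eq:cell-prob} and the myelin profile are in place, the compactness, lower semicontinuity, and a matching upper-bound (recovery-sequence) argument pin down the limit; uniqueness of $(v_0,g_0)$ for \eqref{eq:hom-prob} follows from a standard Gr\"onwall argument using the linearity of $I_{ion}$ in $v$, the Lipschitz bound on $F$, and positivity of $\overline\Lambda$, and uniqueness in turn upgrades convergence of the full sequence and its $C(0,T;L^2(\Gamma_\ve))$ uniformity in $t$.
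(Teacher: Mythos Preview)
Your overall architecture (a priori estimates via energy method and Gr\"onwall, two-scale compactness in the thin domain, oscillating test function, uniqueness of the limit problem) matches the paper, but there is a genuine gap in the part you correctly flag as the main obstacle: the origin of the term $\overline\Lambda\,v_0$.

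You describe the myelin contribution as coming from a ``one-dimensional (radial/angular) reduction of the Laplace equation in the thin myelin wedge between angles $\varphi_A$ and $\varphi_B$ over the internodal length $b-a$,'' with a ``boundary-layer profile interpolating the traces on $\Gamma_{mi}$ and $\Gamma_{me}$.'' This is not the mechanism. First, $(a,b)$ is the Ranvier interval, not the internodal one, and $\varphi_A,\varphi_B$ are the opening angles of the myelin at the two \emph{corner points} $A,B$ where $\Gamma_{mi}$ and $\Gamma_{me}$ meet, not endpoints of a single wedge. Second, and more importantly, the energy that survives in the limit is not a bulk transition across the myelin layer (your power counting $\ve^4\cdot\ve^{-2}\cdot\ve^2$ does not give order $\ve$); it is concentrated at the corners $A$ and $B$. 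A naive interpolant from $1$ on $\Gamma_{mi}$ to $0$ on $\Gamma_{me}$ has a gradient that blows up like $1/\mathrm{dist}(\cdot,\{A,B\})$ and is not even in $H^1$; the leading-order energy is determined entirely by how one resolves these corner singularities.

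The paper handles this not with an ad hoc boundary layer but by introducing the variational problem
\[
\lambda_\delta=\min_{\theta\in H^1_{\rm per}(Y\setminus\Gamma)}\frac{\int_Y\sigma_\delta|\nabla\theta|^2}{\int_\Gamma[\theta]^2},\qquad \sigma_\delta=\delta^2\ \text{in }Y_m,
\]
with $\delta=\ve^2$, and proving $\lambda_\delta=\overline\Lambda\,\delta+o(\delta)$ by an explicit local construction in polar coordinates near $A$ and $B$ (functions of the form $\rho^{\alpha_\delta\delta}\cos(\alpha_\delta\delta\varphi)$ matched across the three sectors); the formula \eqref{eq:Lambda} is exactly the sum of the two corner contributions divided by $|\Gamma'|=b-a$. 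The minimizer $\theta_\delta$ then enters the test function \emph{multiplicatively},
\[
\phi_\ve=U_i(t,x_1)\,\theta_{\ve^2}(x/\ve)+\bigl(1-\theta_{\ve^2}(x/\ve)\bigr)\bigl(U_e(t,x_1)+\ve\,U_1(t,x_1,x/\ve)\bigr),
\]
so that $[\phi_\ve]\to U_i-U_e$ on $\Gamma_\ve$, and the Euler--Lagrange equation for $\theta_\delta$ allows one to integrate by parts and convert the myelin bulk term into $\ve^{-3}\lambda_{\ve^2}\int_{\Gamma_\ve}[\theta_{\ve^2}]\,v_\ve\,U_i\,ds\to|\Gamma|\,\overline\Lambda\int v_0 U_i$. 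Your additive ansatz $\psi+\ve\psi'\widetilde N$ with an unspecified profile in $\Omega_{m,\ve}$ does not produce the required jump structure on $\Gamma_\ve$ and gives no handle for this integration by parts; without the eigenvalue relation $\lambda_{\ve^2}\sim\overline\Lambda\ve^2$ you cannot identify the limit of the myelin term.
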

\bigskip 

\begin{remark}
The effective coefficient $a^\eff$ can be interpreted as the conductivity of the bulk medium corresponding to the conductivity of the intra- and extracellular domains connected in series. 

The effective potential $\overline \Lambda$ is a decreasing function of the angles $\varphi_A, \varphi_B$ and it goes to zero when the angles approach $\pi$. 
\end{remark}
\begin{remark}
Note that, since the equation for $g_0$ is linear in $g_0$, we can solve it explicitly
\begin{align*}
\langle g_0, v\rangle =e^{-\alpha t}\big(G_0(x) + \int_0^t F(v) e^{\alpha \tau} \, d\tau\big).
\end{align*}
Since $F$ is Lipschitz, the composition $I_{ion}(v, g[v])$ is also a Lipschitz function. In this way the effective problem is one nonlinear diffusion equation
\begin{align}
&c_m \partial_t v_0 + I_{ion}(v_0, \langle g_0,v_0\rangle) + \overline \Lambda \, v_0  = a^\eff \partial_{x_1x_1}^2 v_0, \hskip -1cm &(t,x_1) &\in (0,T)\times (0,L),\nonumber\\
\label{eq:1D-eff-without-g}
&v_0(t,0) =  v_0(t,L) =0, &t &\in (0,T),\\
&v_0(0,x_1) =0,  &x_1 &\in (0,L).\nonumber
\end{align}
\end{remark}

\bigskip

To prove Theorem \ref{th:main} we first derive a priori estimates in Section \ref{sec:apriori} (Lemma \ref{lm:apriori-est}), then we prove the two-scale convergence of $u_\ve$ and its gradient (Lemma \ref{lm:convergence-1}) and the 
convergence of $[u_\ve]$ in appropriate spaces (Lemma \ref{lm:piecewise-const-v}). Finally, in Section \ref{sec:pass-to-limit} we pass to the limit in the weak formulation and derive the limit problem (\ref{eq:hom-prob}). Section \ref{sec:aux} is devoted to the construction of an auxiliary function, the main ingredient of the test function used when passing to the limit in the weak formulation.

%%%%%%%%%%%%%%%%%%%%%%%%%%%%%%%%%%%%%%%%%%%%%%%%%

\section{A priori estimates}
\label{sec:apriori}

\begin{lemma}
There exists a unique
\begin{align*}
u_\ve \in L^\infty(0,T; H^1(\Omega_\ve\setminus \Gamma_\ve)), 
\quad
\partial_t v_\ve = \partial_t [u_\ve] \in L^2(0,T; L^2(\Gamma_\ve))
\end{align*}
such that $u_\ve =0$ for $x_1 =0$ and $x_1=L$, for any test functions $\phi\in L^\infty(0,T; H^1(\Omega_\ve\setminus \Gamma_\ve))$, $\phi=0$ for $x_1 =0$ and $x_1=L$, and for almost all $t \in (0,T)$
\begin{align}
\label{eq:weak-original-0}
\ve\int_{\Gamma_\ve} c_m \partial_t v_\ve [\phi] \, ds
+ \int_{\Omega_\ve\setminus \Gamma_\ve} \sigma_\ve \nabla u_\ve\cdot \nabla \phi \, dx
+ \ve \int_{\Gamma_\ve} I_{ion}(v_\ve, \langle g_\ve, v_\ve \rangle) [\phi]\, ds = 0.
\end{align} 

\end{lemma}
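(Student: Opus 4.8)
The plan is a Galerkin construction plus energy estimates, carried out after two reductions that remove the sources of degeneracy in the weak formulation~\eqref{eq:weak-original}: the gate variables $g_\ve$ and the fact that the time derivative acts only on the trace on $\Gamma_\ve$. Throughout, $\ve>0$ is fixed, so all constants may depend on it. First I eliminate $g_\ve$: since $\HH(g,v)=F(v)-\alpha g$ is affine in $g$, for every $v\in C(0,T;L^2(\Gamma_\ve))$ the initial value problem for $g_\ve$ has the unique solution $\langle g_\ve,v\rangle=e^{-\alpha t}\big(G_0+\int_0^t F(v(\tau))e^{\alpha\tau}\,d\tau\big)$, which by (H2)--(H3) depends Lipschitz-continuously on $v\in L^2((0,T)\times\Gamma_\ve)$; together with (H1) this makes $v\mapsto I_{ion}(v,\langle g_\ve,v\rangle)$ a Lipschitz map of $L^2((0,T)\times\Gamma_\ve)$ into itself with linear growth, so it suffices to solve~\eqref{eq:weak-original} with this substitution. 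Next I eliminate the bulk unknown. Put $V=\{\phi\in H^1(\Omega_\ve\setminus\Gamma_\ve):\phi=0\text{ on }\Gamma_0\cup\Gamma_L\}$ and $V_0=\{\phi\in V:[\phi]=0\}$. Testing~\eqref{eq:weak-original} with $\phi\in V_0$ annihilates the first and third integrals, so $\int_{\Omega_\ve\setminus\Gamma_\ve}\sigma_\ve\nabla u_\ve\cdot\nabla\phi\,dx=0$ for all $\phi\in V_0$; since $[u_\ve]=v_\ve$, this means $u_\ve(t)=Ev_\ve(t)$, where $E\colon H^{1/2}(\Gamma_\ve)\to V$ sends $v$ to the unique minimizer of $\int_{\Omega_\ve\setminus\Gamma_\ve}\sigma_\ve|\nabla\phi|^2\,dx$ over the nonempty closed affine set $\{\phi\in V:[\phi]=v\}$ (existence and uniqueness being standard, using $\sigma_\ve\ge\min(\sigma_i,\sigma_e,\ve^4)>0$ and the Dirichlet data on the bases). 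Decomposing an arbitrary test function as $\phi=E[\phi]+\phi_0$ with $\phi_0\in V_0$ gives $\int_{\Omega_\ve\setminus\Gamma_\ve}\sigma_\ve\nabla u_\ve\cdot\nabla\phi\,dx=b_\ve(v_\ve,[\phi])$, where $b_\ve(v,w)=\int_{\Omega_\ve\setminus\Gamma_\ve}\sigma_\ve\nabla(Ev)\cdot\nabla(Ew)\,dx$ is a bounded symmetric bilinear form on $H^{1/2}(\Gamma_\ve)$ which, by the Poincar\'e inequality on $V$ and the trace theorem, is coercive: $b_\ve(v,v)\ge c_\ve\|v\|_{H^{1/2}(\Gamma_\ve)}^2$. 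The problem has thus become a non-degenerate semilinear parabolic equation on $\Gamma_\ve$: find $v_\ve$ with $\partial_t v_\ve\in L^2(0,T;L^2(\Gamma_\ve))$, $v_\ve(0)=0$, such that
\[
\ve c_m\,(\partial_t v_\ve,\psi)_{L^2(\Gamma_\ve)}+b_\ve(v_\ve,\psi)+\ve\,(I_{ion}(v_\ve,\langle g_\ve,v_\ve\rangle),\psi)_{L^2(\Gamma_\ve)}=0\qquad\text{for all }\psi\in H^{1/2}(\Gamma_\ve).
\]

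For existence I would run a Galerkin scheme in a basis $\{\psi_k\}$ of $H^{1/2}(\Gamma_\ve)$, seeking $v_\ve^n=\sum_{k\le n}c^n_k(t)\psi_k$; the governing ODE system has the (now invertible) Gram mass matrix $\big(\ve c_m(\psi_k,\psi_j)_{L^2(\Gamma_\ve)}\big)$ and Lipschitz right-hand side of linear growth, so a local solution exists and extends globally once the a priori bounds below hold. Testing with $\psi=v_\ve^n$ and using $I_{ion}(v,g)\,v=\sum_j H_j(g_j)v^2-\sum_j H_j(g_j)v_{r,j}v\ge-C(1+v^2)$ from (H1) — where $g$ enters only through the bounded factors $H_j$, so no estimate on $g_\ve$ is needed — Gr\"onwall gives $\sup_{(0,T)}\|v_\ve^n\|_{L^2(\Gamma_\ve)}^2+\int_0^T b_\ve(v_\ve^n,v_\ve^n)\,dt\le C$. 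Testing with $\psi=\partial_t v_\ve^n$, using $|I_{ion}(v,g)|\le C(1+|v|)$, $v_\ve^n(0)=0$, and the symmetry of $b_\ve$, yields $\int_0^T\|\partial_t v_\ve^n\|_{L^2(\Gamma_\ve)}^2\,dt+\sup_{(0,T)}b_\ve(v_\ve^n,v_\ve^n)\le C$; by coercivity this bounds $v_\ve^n$ in $L^\infty(0,T;H^{1/2}(\Gamma_\ve))$ and $Ev_\ve^n$ in $L^\infty(0,T;H^1(\Omega_\ve\setminus\Gamma_\ve))$. Since $H^{1/2}(\Gamma_\ve)\hookrightarrow\hookrightarrow L^2(\Gamma_\ve)$, Aubin--Lions gives a subsequence with $v_\ve^n\to v_\ve$ strongly in $L^2((0,T)\times\Gamma_\ve)$, hence $I_{ion}(v_\ve^n,\langle g_\ve,v_\ve^n\rangle)\to I_{ion}(v_\ve,\langle g_\ve,v_\ve\rangle)$ strongly; passing to the limit identifies a solution of the reduced problem, and $u_\ve:=Ev_\ve\in L^\infty(0,T;H^1(\Omega_\ve\setminus\Gamma_\ve))$, $g_\ve=\langle g_\ve,v_\ve\rangle$, solves~\eqref{eq:weak-original} in the stated class.

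For uniqueness, given two solutions let $w$ and $z$ be the differences of the potentials and of the jumps. They satisfy the weak identity with an extra term coming from $I_{ion}(v_\ve^{(1)},\langle g_\ve^{(1)},v_\ve^{(1)}\rangle)-I_{ion}(v_\ve^{(2)},\langle g_\ve^{(2)},v_\ve^{(2)}\rangle)$, which by (H1)--(H2) is bounded in $L^2(\Gamma_\ve)$, for a.e. $t$, by $C\big(\|z(t)\|_{L^2(\Gamma_\ve)}+\int_0^t\|z(\tau)\|_{L^2(\Gamma_\ve)}\,d\tau\big)$. Testing with $\phi=w$ gives $\tfrac{\ve c_m}{2}\tfrac{d}{dt}\|z\|_{L^2(\Gamma_\ve)}^2+\int_{\Omega_\ve\setminus\Gamma_\ve}\sigma_\ve|\nabla w|^2\,dx\le C\|z\|_{L^2(\Gamma_\ve)}\big(\|z\|_{L^2(\Gamma_\ve)}+\int_0^t\|z\|_{L^2(\Gamma_\ve)}\,d\tau\big)$, and a Gr\"onwall argument (first absorbing the memory integral) forces $z\equiv0$, hence $\nabla w\equiv0$ and, by Poincar\'e, $w\equiv0$.

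The only genuine obstacle is the degeneracy of the evolution: the time derivative lives solely on $\Gamma_\ve$, so~\eqref{eq:weak-original} is not a parabolic equation on $H^1(\Omega_\ve\setminus\Gamma_\ve)$ and a naive Galerkin scheme there would have a singular mass matrix. Reducing the bulk to the Steklov-type form $b_\ve$ on $\Gamma_\ve$ in the first step is precisely what resolves this; the favourable sign of the quadratic part of $I_{ion}$ is what makes the energy estimates go through cleanly, and the mild memory produced by the gate variables costs only one extra Gr\"onwall iteration in the uniqueness step.
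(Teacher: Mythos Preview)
Your argument is correct and shares with the paper the essential reduction: recognizing that the degeneracy of the time derivative forces one to treat the problem as an evolution for $v_\ve=[u_\ve]$ on $\Gamma_\ve$, with the bulk potential recovered a posteriori. The paper implements this via abstract semigroup theory: it defines an operator $A:D(A)\subset L^2(\Gamma_\ve)\to L^2(\Gamma_\ve)$ (the jump-to-flux map associated with the elliptic transmission problem), checks that its quadratic form is negative so that $A$ generates a $C_0$-semigroup of contractions, and invokes the standard theory for Lipschitz perturbations of linear semigroups (Pazy) to obtain a mild solution, upgrading regularity through the a priori estimates of the next lemma. Your route is the variational counterpart: the bilinear form $b_\ve$ you build is exactly the quadratic form of $-A$, and instead of Hille--Yosida you run a Galerkin scheme with the two energy identities (test with $v_\ve^n$ and $\partial_t v_\ve^n$) plus Aubin--Lions.

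What each buys: the semigroup approach is shorter once one accepts the abstract machinery, and it separates existence of mild solutions from the regularity upgrade; your Galerkin approach is more self-contained, makes the regularity $\partial_t v_\ve\in L^2(0,T;L^2(\Gamma_\ve))$ and $u_\ve\in L^\infty(0,T;H^1)$ come out directly from the second energy estimate (using $v_\ve(0)=0$), and exposes transparently why the favourable sign of $\sum_j H_j(g_j)v^2$ in (H1) is what closes the estimates without any control on $g_\ve$ beyond boundedness of $H_j$. The uniqueness arguments are essentially the same in both treatments.
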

\begin{proof}
The existence of a mild solution follows from the classical semigroup theory (see, for example, \cite{Pazy}). For the existence of more regular solutions see \cite{Matano}, \cite{HJ}. We present just an idea of the proof. 

Denote $v_\ve = [u_\ve]$ and let us rewrite (\ref{eq:1-1})-(\ref{eq:1-7}) in the form
\begin{align}
\label{eq:abstract-eq-v}
\ve (c_m \partial_t v_\ve + I_{ion}(v_\ve, \langle g_\ve, v_\ve\rangle) &= A v_\ve, \quad (t,x)\in(0,T)\times \Gamma_\ve,\\
v_\ve(0,x) &= 0, \quad x \in \Gamma_\ve,
\end{align}
where the operator $A:D(A)\subset L^2(\Gamma_\ve) \to L^2(\Gamma_\ve)$ maps the jump across the nodes $v_\ve=[u_\ve]$ into the solution $u_\ve$  and then to the normal derivative $\sigma_\ve \nabla u_\ve \cdot \nu$. To construct such an operator we fix $f_\ve=- \sigma_\ve \nabla u_\ve \cdot \nu \in L^2(\Gamma_\ve)$, define $v_\ve \in L^2(\Gamma_\ve)$ as a solution of $\ve (c_m \partial_t v_\ve + I_{ion}(v_\ve, \langle g_\ve, v_\ve \rangle)) = f$, and then for each $v_\ve$ we associate a unique solution $u_\ve \in H^1(\Omega_\ve \setminus \Gamma_\ve)$ of problem (\ref{eq:1-1})-(\ref{eq:1-7}). The trace of $u_\ve$ on $\Gamma_\ve$ belongs to $D(A)=H^{3/2}(\Gamma_\ve)$. The operator $A$ is associated with the quadratic form
\begin{align*}
(Av, v)_{L^2(\Gamma_\ve)} = - \int_{\Gamma_\ve} \sigma_\ve \nabla v \cdot \nabla v \, dx,
\end{align*}
is closed and densely defined.  Due to the Poincar\'e inequality, the quadratic form is negative
\begin{align*}
(Av, v)_{L^2(\Gamma_\ve)} = - \int_{\Gamma_\ve} \sigma_\ve \nabla v \cdot \nabla v \, dx \le - C \|v\|_{L^2(\Gamma_\ve)}^2 <0,
\end{align*}
and thus the resolvent set of $A$ contains $\mathbf R_{+}$. Futherrmore, for $\lambda >0$ and $\|v\|_{L^2(\Gamma_\ve)}=1$ we have
\begin{align*}
\lambda < \lambda( v,v)_{L^2(\Gamma_\ve)}  - (Av, v)_{L^2(\Gamma_\ve)} = (\lambda v - A v, v)_{L^2(\Gamma_\ve)},  
\end{align*}  
that implies that $A$ is the infinitesimal generator of a strongly continuous semigroup of contractions (see Theorem 3.1 in \cite{Pazy}).  Since $I_{ion}(v,\langle g, v\rangle)$ is Lipschitz continuous with respect to $v$, there exists a unique mild solution $v_\ve \in C([0,T]; L^2(\Gamma_\ve))$ of (\ref{eq:abstract-eq-v}). It is left to show that $u_\ve\in L^\infty(0,T; H^1(\Omega_\ve \setminus \Gamma_\ve))$ in the bulk domain and $\partial_t v_\ve \in L^2(0, T; L^2(\Gamma_\ve))$. This is done by deriving a priori estimates as in Lemma \ref{lm:apriori-est}.

\end{proof}
\medskip

%%%%%%%%%%%%%%%%%%%%%%%%%%%%%%%%%%%%%%%%%%%%%%%%%%%%%%
\begin{lemma}[A priori estimates]
\label{lm:apriori-est}
Let $(u_\ve, g_\ve)$ be a solution of (\ref{eq:1-1}-\ref{eq:1-7}). Denote $v_\ve = [u_\ve]$. Then the following estimates hold:
\begin{enumerate}[(i)]
\item
$\displaystyle \ve^{-1} \int_{\Gamma_\ve} |v_\ve|^2 \, ds \le C, \quad t \in (0,T)$.
\item
$\displaystyle \ve^{-1} \int_0^t \int_{\Gamma_\ve} |\partial_\tau v_\ve|^2 \, ds \, d\tau \le C, \quad t \in (0,T)$.

\item
$\displaystyle \ve^{-2} \int_{\Omega_{i,\ve} \cup \Omega_{e,\ve}}  (|u_\ve|^2 + |\nabla u_\ve|^2) \, dx \le C, \quad t \in (0,T)$.
\item
$\displaystyle \int_{\Omega_{m,\ve}}  |u_\ve|^2 \, dx + \ve^{2} \int_{\Omega_{m,\ve}}  |\nabla u_\ve|^2 \, dx \le C, \quad t \in (0,T)$.

\end{enumerate}
\end{lemma}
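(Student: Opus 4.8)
The plan is to derive the four estimates by the standard energy method for this parabolic-elliptic system, testing the weak formulation \eqref{eq:weak-original-0} with well-chosen test functions and exploiting the structure of $\sigma_\ve$ and the smallness of the myelin conductivity $\ve^4$.

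\textbf{Step 1: the basic energy estimate, giving (i) and (iii).} I would first take $\phi = u_\ve$ in \eqref{eq:weak-original-0}. The jump term becomes $[\phi]=[u_\ve]=v_\ve$, so the first term is $\ve\int_{\Gamma_\ve} c_m\,\partial_t v_\ve\, v_\ve\,ds = \frac{\ve c_m}{2}\frac{d}{dt}\int_{\Gamma_\ve}|v_\ve|^2\,ds$. The bulk term is $\int_{\Omega_\ve\setminus\Gamma_\ve}\sigma_\ve|\nabla u_\ve|^2\,dx \ge \min(\sigma_i,\sigma_e)\int_{\Omega_{i,\ve}\cup\Omega_{e,\ve}}|\nabla u_\ve|^2\,dx + \ve^4\int_{\Omega_{m,\ve}}|\nabla u_\ve|^2\,dx$. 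For the ionic term, using (H1) and the explicit bound on $\langle g_\ve, v_\ve\rangle$ (which is bounded since $G_0$, $F$ are bounded on the relevant range, via (H2)--(H3)), one has $I_{ion}(v_\ve,\langle g_\ve,v_\ve\rangle) = \sum_j H_j(\langle g_\ve\rangle_j)(v_\ve - v_{r,j})$ with $H_j$ bounded, so $\big|\ve\int_{\Gamma_\ve} I_{ion}(v_\ve,\cdot)\,v_\ve\,ds\big| \le C\ve\int_{\Gamma_\ve}|v_\ve|^2\,ds + C\ve\int_{\Gamma_\ve}|v_\ve|\,ds \le C\ve\int_{\Gamma_\ve}|v_\ve|^2\,ds + C\ve|\Gamma_\ve|$, and $|\Gamma_\ve|\sim \ve$ so the last term is $O(\ve^2)$. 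The crucial quantitative ingredient is a scaled trace/Poincar\'e inequality on the thin cylinder: for $w\in H^1(\Omega_{i,\ve}\cup\Omega_{e,\ve})$ with $w=0$ on $\Gamma_0\cup\Gamma_L$, one has $\int_{\Gamma_\ve}|[w]|^2\,ds \le C\ve\int_{\Omega_{i,\ve}\cup\Omega_{e,\ve}}|\nabla w|^2\,dx$ (and similarly $\int_{\Omega_{i,\ve}\cup\Omega_{e,\ve}}|w|^2\,dx\le C\ve^2\int|\nabla w|^2\,dx$); these follow by rescaling $y=x/\ve$ on each period and summing, the factors $\ve$ and $\ve^2$ coming from the measure of the thin sets versus the reference cell. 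Combining, $\frac{\ve c_m}{2}\frac{d}{dt}\int_{\Gamma_\ve}|v_\ve|^2\,ds + c\int_{\Omega_{i,\ve}\cup\Omega_{e,\ve}}|\nabla u_\ve|^2\,dx + \ve^4\int_{\Omega_{m,\ve}}|\nabla u_\ve|^2\,dx \le C\ve\int_{\Gamma_\ve}|v_\ve|^2\,ds + O(\ve^2)$, and Gr\"onwall (after dividing by $\ve$) gives $\ve^{-1}\int_{\Gamma_\ve}|v_\ve|^2\,ds\le C$, i.e.\ (i); feeding this back and using the trace inequality yields $\ve^{-2}\int_{\Omega_{i,\ve}\cup\Omega_{e,\ve}}|\nabla u_\ve|^2\,dx\le C$, and the Poincar\'e inequality on the thin cylinder upgrades this to the $|u_\ve|^2$ part of (iii).

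\textbf{Step 2: the time-derivative estimate (ii).} Here I would differentiate the weak formulation in $t$ and test with $\phi = \partial_t u_\ve$ (formally; rigorously one works with difference quotients or the Galerkin approximation). This produces $\frac{\ve c_m}{2}\frac{d}{dt}\int_{\Gamma_\ve}|\partial_t v_\ve|^2\,ds + \int_{\Omega_\ve\setminus\Gamma_\ve}\sigma_\ve|\nabla\partial_t u_\ve|^2\,dx + \ve\int_{\Gamma_\ve}\partial_t\big(I_{ion}(v_\ve,\langle g_\ve,v_\ve\rangle)\big)\,\partial_t v_\ve\,ds = 0$. Since $I_{ion}$ is linear in $v$ and, through $g_\ve$, depends on $v$ via a Lipschitz integral functional (H2), one gets $\big|\partial_t I_{ion}\big|\le C|\partial_t v_\ve| + C\int_0^t|\partial_t v_\ve|\,d\tau + C$ pointwise, so the $\Gamma_\ve$-term is controlled by $C\ve\int_{\Gamma_\ve}|\partial_t v_\ve|^2\,ds$ plus lower-order terms involving $\int_0^t\int_{\Gamma_\ve}|\partial_\tau v_\ve|^2$ and the already-bounded quantities. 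The one subtlety is the initial value $\int_{\Gamma_\ve}|\partial_t v_\ve(0)|^2\,ds$: from \eqref{eq:1-3} at $t=0$ with $[u_\ve](0)=0$ and $g_\ve(0)=G_0$ one has $\ve c_m\partial_t v_\ve(0) = -\sigma_i\nabla u_\ve^i(0)\cdot\nu - \ve I_{ion}(0,G_0)$, and since $u_\ve(0)$ solves the elliptic problem with zero jump (hence $u_\ve(0)\equiv 0$), $\partial_t v_\ve(0) = -\frac{1}{c_m}I_{ion}(0,G_0)$, which is bounded, so $\ve^{-1}\int_{\Gamma_\ve}|\partial_t v_\ve(0)|^2\,ds\le C|\Gamma_\ve|/\ve\le C$. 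Gr\"onwall (after dividing by $\ve$) then yields (ii).

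\textbf{Step 3: the myelin estimates (iv).} The $\ve^2\int_{\Omega_{m,\ve}}|\nabla u_\ve|^2\,dx\le C$ bound is \emph{not} directly delivered by Step~1, which only gave $\ve^4\int_{\Omega_{m,\ve}}|\nabla u_\ve|^2\,dx\le C\ve^2$; the cleaner route is to test \eqref{eq:weak-original-0} with $\phi$ supported in $\overline{\Omega_{m,\ve}}$ equal to $u_\ve^m$ minus a harmonic-type extension matching the already-controlled Dirichlet data $u_\ve^i|_{\Gamma_{mi}}$, $u_\ve^e|_{\Gamma_{me}}$. On each myelin cell the rescaled elliptic estimate gives $\ve^2\int_{\Omega_{m,\ve}}|\nabla u_\ve^m|^2\,dx \le C\big(\text{Dirichlet energy of the boundary data}\big)$, and the boundary traces of $u_\ve^{i,e}$ are bounded in the right scaled norm by (iii); the factor $\ve^2$ is again the rescaling factor $\ve^{2}$ for the thin myelin shell balanced against the reference-cell estimate (there is no power of $\sigma_\ve$ here because we are estimating $\nabla u_\ve^m$ itself, not $\sigma_\ve$-weighted). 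Then $\int_{\Omega_{m,\ve}}|u_\ve^m|^2\,dx$ follows from a Poincar\'e-type inequality relative to the boundary values, $\int_{\Omega_{m,\ve}}|u_\ve^m|^2\,dx\le \int_{\Omega_{m,\ve}}|u_\ve^m - (\text{ext})|^2 + \int_{\Omega_{m,\ve}}|\text{ext}|^2 \le C\ve^2\int_{\Omega_{m,\ve}}|\nabla u_\ve^m|^2 + C\ve^2\int_{\Omega_{i,\ve}\cup\Omega_{e,\ve}}|\nabla u_\ve|^2 \le C$.

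\textbf{Main obstacle.} The real work is in the \emph{scaled functional inequalities on the thin geometry} — the trace inequality $\int_{\Gamma_\ve}|[w]|^2\le C\ve\int|\nabla w|^2$ with the correct $\ve$-power and, especially, the elliptic estimate on the myelin shell that converts bounded (scaled) Dirichlet data into the $\ve^2$-weighted gradient bound in (iv), uniformly over the $O(L/\ve)$ periods. These require care with the cell-wise rescaling and the summation over periods, and the hypothesis that $\Gamma_m$ is merely Lipschitz means one must use the $H^{1/2}$--$H^1$ trace theory on a Lipschitz reference cell rather than smooth elliptic regularity; but all constants are $\ve$-independent after rescaling, so the argument closes.
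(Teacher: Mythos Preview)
Your overall strategy---energy identities from testing with $u_\ve$ and with $\partial_t u_\ve$, plus cell-wise Poincar\'e/trace inequalities---is exactly the paper's. Two points, however, should be cleaned up.

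\textbf{Ordering of (iii).} Step~1 alone cannot give the \emph{pointwise-in-$t$} bound $\ve^{-2}\int_{\Omega_{i,\ve}\cup\Omega_{e,\ve}}|\nabla u_\ve|^2\le C$: after Gr\"onwall you only control the time-integrated gradient energy, since $\tfrac{d}{dt}\int_{\Gamma_\ve}|v_\ve|^2$ may be negative. The paper fixes this in the second energy estimate, but \emph{without differentiating the equation}: testing \eqref{eq:weak-original-0} directly with $\phi=\partial_t u_\ve$ produces
\[
\ve\int_{\Gamma_\ve} c_m|\partial_t v_\ve|^2\,ds
+\ve\int_{\Gamma_\ve} I_{ion}\,\partial_t v_\ve\,ds
+\tfrac12\,\frac{d}{dt}\int_{\Omega_\ve\setminus\Gamma_\ve}\sigma_\ve|\nabla u_\ve|^2\,dx=0,
\]
so integrating in $t$ (and using $\nabla u_\ve|_{t=0}=0$, exactly as you observed) yields \emph{both} (ii) and the pointwise bound $\ve^{-2}\int\sigma_\ve|\nabla u_\ve|^2\le C$ in one stroke. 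Your differentiated version is a legitimate alternative but gives $\int\sigma_\ve|\nabla\partial_t u_\ve|^2$ instead, forcing you to return to the first identity afterwards; the paper's route is shorter.

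\textbf{Step 3 is an algebraic slip.} The inequality you wrote down, $\ve^4\int_{\Omega_{m,\ve}}|\nabla u_\ve|^2\le C\ve^2$, \emph{is} the bound $\ve^{2}\int_{\Omega_{m,\ve}}|\nabla u_\ve|^2\le C$ after dividing by $\ve^2$; there is nothing further to prove for the gradient part of (iv), and your harmonic-extension detour is unnecessary. Only the $L^2$ part of (iv) requires work, and the paper does essentially what you sketch at the end: on each myelin piece $\ve Y_{m,k}$ apply Poincar\'e relative to the mean of $u_\ve$ on $\ve\Gamma_{mi,k}$, use continuity of traces to identify this mean with that of $u_\ve^i$, bound the latter by the scaled trace inequality in terms of $\|u_\ve^i\|_{H^1(\ve Y_{i,k})}$, and sum over $k$.
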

\begin{proof}

\noindent
Let us multiply (\ref{eq:1-1}) by $u_\ve$, integrate by parts over $\Omega_\ve \setminus \Gamma_\ve$ and divide the resulting identity by $\ve^2$ (the scaling factor of the order of measure of the thin domain $\Omega_\ve$):
\begin{align*}
\frac{\ve}{2} \frac{d}{dt} \int_{\Gamma_\ve} c_m v_\ve^2 ds + {\ve} \int_{\Gamma_\ve} 
I_{ion}(v_\ve, \langle g_\ve, v_\ve \rangle)
 v_\ve \, ds +
\int_{\Omega_\ve\setminus \Gamma_\ve} \sigma_\ve |\nabla u_\ve|^2 \, dx = 0.
\end{align*}
Integrating the last equality with respect to $t$ we get
\begin{align}
\label{eq:est-1}
\frac{\ve}{2} \int_{\Gamma_\ve} c_m v_\ve^2 ds + \frac{\ve}{2} \int_0^t \int_{\Gamma_\ve} I_{ion}(v_\ve, \langle g_\ve, v_\ve \rangle) v_\ve \, ds d\tau+
\int_0^t \int_{\Omega_\ve\setminus \Gamma_\ve} \sigma_\ve |\nabla u_\ve|^2 \, dx d\tau
= 0.
\end{align}
Using Lipschitz continuity of $I_{ion}$ and applying Gr\"onwall's inequality we obtain the following estimate for $v_\ve$:
\begin{align*}
\ve^{-1} \int_{\Gamma_\ve} v_\ve^2 \, ds \le
C\ve^{-1} (\|G_0\|_{L^2(\Gamma_\ve)}^2 + 1) \, e^{\gamma_0 t} \le C_1(t), 
\end{align*}
for some constants $C, C_1$ and $\gamma_0>0$ in dependent of $\ve$. Estimate $(i)$ is proved. 

From (\ref{eq:est-1})  and $(i)$ we derive an integral estimate for $\nabla u_\ve$:
\begin{align*}
\int_0^t \int_{\Omega_\ve\setminus \Gamma_\ve} \sigma_\ve |\nabla u_\ve|^2 \, dx d\tau \le C.
\end{align*}

Let us now multiply (\ref{eq:1-1}) by $\partial_t u_\ve$ and integrate by parts over $\Omega_\ve\setminus \Gamma_\ve$:
\begin{align*}
\ve^{-1} \int_{\Gamma_\ve} c_m |\partial_t v_\ve|^2 ds + \ve^{-1} \int_{\Gamma_\ve} I_{ion}(v_\ve, \langle g_\ve, v_\ve \rangle) \partial_t v_\ve \, ds+
\frac{\ve^{-2}}{2} \frac{d}{dt} \int_{\Omega_\ve\setminus \Gamma_\ve} \sigma_\ve |\nabla u_\ve|^2 \, dx = 0.
\end{align*}
Integrating w.r.t. $t$ gives
\begin{align}
\label{eq:est-2}
\ve^{-1} \int_0^t \int_{\Gamma_\ve} c_m |\partial_\tau v_\ve|^2 ds d\tau + \ve^{-1} \int_0^t \int_{\Gamma_\ve} I_{ion}(v_\ve, \langle g_\ve, v_\ve \rangle) \partial_\tau v_\ve \, ds d\tau \nonumber \\
+
\frac{\ve^{-2}}{2}  \int_{\Omega_\ve} \sigma_\ve |\nabla u_\ve|^2 \, dx = \frac{\ve^{-2}}{2}  \int_{\Omega_\ve\setminus \Gamma_\ve} \sigma_\ve |\nabla u_\ve|^2\Big|_{t=0} \, dx.
\end{align}
To find $\nabla w_\ve(x)=\nabla u_\ve \big|_{t=0}$ we solve the following elliptic problem
\begin{align*}
-&\di{\sigma_\ve \nabla w_\ve}  =0, \, &x &\in \Omega_\ve\setminus \Gamma_\ve,
\\
& [w_\ve] = v_\ve\Big|_{t=0} = 0, \, &x &\in \Gamma_\ve,
\\
&[\sigma \nabla w_\ve\cdot \nu] = 0, \, &x &\in \Gamma_\ve\cup \Gamma_{m,\ve},
\\
&\nabla w_\ve^e\cdot \nu   = 0, \, &x& \in \Sigma_\ve,
\\
&w_\ve  =  0, \, &x_1& \in \Gamma_0\cup \Gamma_L.
\end{align*}
It is clear that $\nabla w_\ve =0$.

The Gr\"onwalls inequality applied in (\ref{eq:est-2}) yields $(ii)$.

Estimates (\ref{eq:est-2}) and $(ii)$ imply that 
\begin{align}
\label{eq:est-grad}
\ve^{-2}  \int_{\Omega_\ve\setminus \Gamma_\ve} \sigma_\ve |\nabla u_\ve|^2 \, dx \le C, \quad t \in (0,T).
\end{align}
Since $u_\ve$ satisfies the homogeneous Dirichlet boundary condition for $x_1=0$, Friedrichs's inequality is valid for $u_\ve$ in $\Omega_{i, \ve}$ and $\Omega_{e, \ve}$ which gives us $(ii)$.

In order to obtain an $L^2$-bound for $u_\ve$ in $\Omega_{m, \ve}$ we use the Poincar\'e inequality inequality in each myelin part $\ve Y_{m,k}$ and then sum them up to obtain an estimate in $\cup_{k} \ve Y_{m,k} = \Omega_{m, \ve}$. Namely, for $u\in H^1(\ve Y_{m,k})$, let $\bar u_{\ve, k}^m$ denote the mean value over $k$the interface between the intracellular domain and myelin $\Gamma_{mi,k}$
\begin{align*}
\bar u_{\ve,k}^m = \frac{1}{|\ve \Gamma_{mi}|}\int_{\ve \Gamma_{mi,k}} u \, ds.
\end{align*}
We derive with the help of  the  Poincarï¿½ inequality 
\begin{align}
\label{eq:est-3}
\int_{\ve Y_{m,j}} |u_\ve^m - \bar u_{\ve,k}^m|^2\, dx &\le C \ve^2 \|\nabla u_\ve^m\|_{L^2(\ve Y_{m,j})}^2 \nonumber
\\
\int_{\ve Y_{m,j}} |u_\ve^m|^2\, dx &\le C \ve^2( \int_{\ve Y_{m,j}} |\nabla u_\ve^m|^2\, dx + \int_{\ve Y_{m,j}} (\bar u_{\ve,k}^m)^2 dx ).
\end{align}
Due to the continuity of traces of $u_\ve$, 
\begin{align}
\label{eq:est-4}
\int_{\ve Y_{m,j}} (\bar u_{\ve,k}^m)^2 dx = \int_{\ve Y_{m,j}} (\bar u_{\ve,k}^i)^2 dx \le C \ve\|u_\ve^i\|_{L^2(\ve \Gamma_{mi})}^2,\\
\label{eq:est-5}
\ve\|u_\ve^i\|_{L^2(\ve \Gamma_{mi})}^2 \le C(\|u_\ve^i\|_{L^2(\ve Y_{i,j})}^2 + \ve^2 \|\nabla u_\ve^i\|_{L^2(\ve Y_{i,j})}^2).
\end{align}
Combining (\ref{eq:est-3})-(\ref{eq:est-5}) we obtain
\begin{align*}
\int_{\ve Y_{m,j}} |u_\ve^m|^2\, dx &\le C (\ve^2 \int_{\ve Y_{m,j}} |\nabla u_\ve^m|^2\, dx +
\int_{\ve Y_{i,j}} |\nabla u_\ve^i|^2\, dx + \ve^2 \int_{\ve Y_{i,j}} |\nabla u_\ve^i|^2\, dx).
\end{align*}
Adding up $\ve Y_{m,j}$ and taking into account (\ref{eq:est-grad}) yields the estimate for the $L^2$-norm for $u_\ve^m$
\begin{align*}
\int_{\Omega_{m, \ve}} |u_\ve^m|^2\, dx &\le C ( \ve^2 \int_{\Omega_{m, \ve}} |\nabla u_\ve^m|^2\, dx + 
 \int_{\Omega_{i, \ve}} | u_\ve^i|^2\, dx 
 +  \ve^2 \int_{\Omega_{i, \ve}} |\nabla u_\ve^i|^2\, dx)\le C,
\end{align*}
which completes the proof.

\end{proof}

Let us recall the notion of the two-scale convergence that will be used when passing to the limit. 
%%%%%%%%%%%%%%%%%%%%%%%%%%%%%%%%%%%%%%%%%%%%%%%%%

\begin{definition}
We say that $u_\ve(t,x)$ converges two-scale to $u_0(t, x_1, y)$ in $L^2(0,T; L^2(\Omega_{l,\ve}))$, $l=i, e$, if
\begin{enumerate}[(i)]
\item
$\displaystyle \ve^{-2} \int_0^T \int_{\Omega_{l,\ve}} |u_\ve|^2 dx \, dt < \infty$.
\item For any $\phi(t,x_1) \in C(0,T; L^2(0,L))$, $\psi(y)\in L^2(Y_l)$ we have
\begin{align*}
\lim\limits_{\ve \to 0}\ve^{-2} \int_0^T \int_{\Omega_{l,\ve}} u_\ve(x) \phi(t,x_1) \psi\big(\frac{x}{\ve}\big)\, dx\, dt =
\frac{1}{|Y|}\int_0^T \int_0^L \int_{Y_l} u_0(t, x_1,y) \phi(t,x_1) \psi(y)\, dy \,dx_1\, dt,
\end{align*}
for some function $u_0\in L^2(0,T; L^2((0,L)\times Y))$. 
\end{enumerate} 
\end{definition}

\begin{definition}
We say that $v_\ve(t,x)$ converges two-scale to $v_0(t, x_1, y)$ in $L^2(0,T; L^2(\Gamma_\ve))$ if
\begin{enumerate}[(i)]
\item
$\displaystyle \ve^{-1} \int_0^T \int_{\Gamma_\ve} v_\ve^2 \, ds \, dt < \infty$.
\item For any $\phi(t,x_1) \in L^\infty(0,T; L^2(0,L))$, $\psi(y)\in L^2(\Gamma)$ we have
\begin{align*}
\lim\limits_{\ve \to 0}\ve^{-1} \int_0^T \int_{\Gamma_\ve} v_\ve(x) \phi(t,x_1) \psi\big(\frac{x}{\ve}\big)\, ds_x\, dt =
\frac{1}{|Y|}\int_0^T \int_0^L \int_{\Gamma} v_0(t, x_1, y) \phi(t,x_1) \psi(y)\, ds_y \,dx_1\, dt
\end{align*}
for some function $v_0 \in L^2(0,T; L^2((0,L)\times \Gamma))$.
\end{enumerate} 
\end{definition}

\begin{lemma}
\label{lm:convergence-1}
Let $u_\ve$ be a solution of (\ref{eq:1-1}-\ref{eq:1-7}). Denote by $\mathbf I_{\Omega_{l,\ve}}$ the characteristic functions of $\Omega_{l,\ve}$, $l=i,e$. Then, up to a subsequence, 
\begin{enumerate}[(i)]
\item
$[u_\ve]$ converges two-scale to $v_0(t,x_1,y)$ in $L^2(0,T; L^2(\Gamma_\ve))$.
\vskip 0.3cm
\item
$\partial_t [u_\ve]$ converges two-scale to $\partial_t v_0(t,x_1,y)$ in $L^2(0,T; L^2(\Gamma_\ve))$.
\vskip 0.3cm

\item
$\mathbf I_{\Omega_{l,\ve}} u_\ve$ converges two-scale to $\displaystyle \frac{|Y_l|}{|Y|} \, u_0^l(t,x_1)$ in $L^2(0,T; L^2(\Omega_{l,\ve}))$.
\vskip 0.3cm
\item
$\mathbf I_{\Omega_{l,\ve}} \nabla u_\ve$ converges two-scale to $\displaystyle \frac{1}{|Y|} \, (\partial_{x_1} u_0^l(t,x_1) \mathbf{e_1} + \nabla_y w^l(t, x_1, y)$ in $(L^2(0,T; L^2(\Omega_{l,\ve})))$. Here $\mathbf{e_1} = (1,0,0)\in \rr^3$, $w^l \in L^2(0,T; L^2(0,L)\times H^1(Y))$.
\vskip 0.3cm

\end{enumerate}
\end{lemma}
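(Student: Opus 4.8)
The plan is to establish the four two-scale convergence statements of Lemma~\ref{lm:convergence-1} as consequences of the a priori bounds in Lemma~\ref{lm:apriori-est} together with the compactness theorem for two-scale convergence on thin periodic domains. First I would note that the bound $(iii)$ of Lemma~\ref{lm:apriori-est}, namely $\ve^{-2}\int_{\Omega_{i,\ve}\cup\Omega_{e,\ve}}(|u_\ve|^2+|\nabla u_\ve|^2)\,dx\le C$ uniformly in $t$, integrates over $(0,T)$ to give the uniform $L^2(0,T;L^2)$ bounds required in item (i) of the definition of two-scale convergence for $\mathbf I_{\Omega_{l,\ve}}u_\ve$ and $\mathbf I_{\Omega_{l,\ve}}\nabla u_\ve$. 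Likewise bounds $(i)$ and $(ii)$ of Lemma~\ref{lm:apriori-est} give the $\ve^{-1}\int_0^T\int_{\Gamma_\ve}|v_\ve|^2$ and $\ve^{-1}\int_0^T\int_{\Gamma_\ve}|\partial_t v_\ve|^2$ bounds needed for the surface two-scale convergence of $[u_\ve]$ and $\partial_t[u_\ve]$. Applying the thin-domain two-scale compactness theorem (in the spirit of Allaire, and of Marchenko--Khruslov / Neuss-Radu for boundary two-scale convergence) one extracts, up to a subsequence, limits $v_0$, $u_0^l$ and $w^l$ so that (i)--(iv) hold with \emph{some} two-scale limits; the content of the lemma is then to identify the structure of those limits.

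The key structural step is to show that the bulk two-scale limit of $u_\ve$ in $\Omega_{l,\ve}$ is independent of the transverse variable $y'$ and of the fast longitudinal variable $y_1$, i.e. that it is of the form $u_0^l(t,x_1)$, and correspondingly that the two-scale limit of $\nabla u_\ve$ is $\partial_{x_1}u_0^l\,\mathbf e_1+\nabla_y w^l$. This is where the thinness of the domain enters decisively. I would argue as follows: since $\ve^{-1}\nabla u_\ve$ (rescaled appropriately) is bounded, the standard two-scale argument gives a limit gradient of the form $\nabla_{x_1}u_0^l\,\mathbf e_1+\nabla_y w^l(t,x_1,y)$ where a priori $u_0^l$ could depend on $y$; but testing the defining two-scale relation against functions $\phi(t,x_1)\,\partial_{y_j}\psi(y)$ and using that $u_\ve$ has a genuine (not merely formal) gradient bounded by $C\ve$ in $\Omega_{l,\ve}$ forces the transverse derivatives $\partial_{y'}u_0^l=0$; periodicity in $y_1$ plus the divergence-free structure then removes the $y_1$-dependence as well. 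Concretely, integrating by parts in the two-scale limit relation against $y$-divergence-free, $Y_l$-supported test fields shows $\nabla_y u_0^l=0$. The characteristic-function factors $|Y_l|/|Y|$ in (iii) and $1/|Y|$ in (iv) are simply the bookkeeping constants coming from the normalization $\ve^{-2}\,dx\rightsquigarrow |Y|^{-1}\,dy\,dx_1$ in the definition of two-scale convergence, since $\mathbf I_{\Omega_{l,\ve}}$ has mean $|Y_l|/|Y|$ over the cell.

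For the surface statements (i) and (ii), I would invoke the boundary version of the two-scale compactness result: the bound $\ve^{-1}\int_0^T\int_{\Gamma_\ve}|v_\ve|^2\,ds\,dt\le C$ yields a two-scale limit $v_0(t,x_1,y)$ on $(0,L)\times\Gamma$; a trace/Poincar\'e argument on the cell (using that $v_\ve=[u_\ve]$ is the jump of a function whose rescaled $H^1$ norm on the neighbouring $\ve$-cells is $O(\ve)$, exactly as exploited in the proof of Lemma~\ref{lm:apriori-est}) would let me later identify $v_0$ as $y$-independent, but at the level of this lemma it suffices to record the abstract two-scale limit. For (ii), since $\partial_t$ commutes with the spatial two-scale testing and $\partial_t v_\ve$ is bounded in $L^2(0,T;L^2(\Gamma_\ve))$ with the $\ve^{-1}$ weight, its two-scale limit is necessarily $\partial_t v_0$ — one passes to the limit in $\int_0^T\int_{\Gamma_\ve}\partial_t v_\ve\,\phi\,\psi\,ds\,dt=-\int_0^T\int_{\Gamma_\ve}v_\ve\,\partial_t\phi\,\psi\,ds\,dt$ for $\phi$ compactly supported in time.

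The main obstacle I anticipate is the rigorous identification of the limit gradient structure in (iv): unlike the classical fixed-domain homogenization setting, here the domain collapses to a segment, so one must carefully track the $\ve$-scalings and justify that the rescaled gradients two-scale converge to an object with \emph{no} transverse macroscopic derivative, only the corrector $\nabla_y w^l$ plus $\partial_{x_1}u_0^l\mathbf e_1$. Establishing $\nabla_{y'}u_0^l=0$ and the periodicity structure of $w^l$ cleanly — rather than by a purely formal two-scale expansion — is the delicate point, and it is exactly here that the thin-cylinder geometry and the uniform-in-$t$ nature of the estimates in Lemma~\ref{lm:apriori-est} must be used. The remaining items are then routine applications of the two-scale compactness machinery.
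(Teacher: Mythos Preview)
Your proposal is correct and follows essentially the same approach as the paper: the paper's own proof is simply omitted, with the remark that it ``follows the lines of classical compactness results for two-scale convergence'' and references to Allaire--Damlamian for periodic surfaces and to Zhikov and Pettersson for two-scale convergence in thin structures and dimension reduction. Your write-up is in fact more detailed than what the paper provides, but the underlying strategy---use the a~priori bounds of Lemma~\ref{lm:apriori-est} to invoke two-scale compactness in the thin-domain and periodic-surface settings, then identify the limit structure---is exactly what the paper has in mind.
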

\begin{proof}
The proof follows the lines of classical compactness results for two-scale convergence and therefore is omitted. We refer to \cite{AlDa-95} for two-scale convergence on periodic surfaces (on $\Gamma_\ve$), to \cite{Zh-2000} and \cite{Pe-17} for two-scale convergence in thin structures and dimension reduction. 
\end{proof}
%%%%%%%%%%%%%%%%%%%%%%%%%%%%%%%%%%%%%%%%%%%%%%%%%

\begin{lemma}[Properties of {$\left[u_\ve\right]$}]
\label{lm:piecewise-const-v}
Let $u_\ve$ be a solution of (\ref{eq:1-1}-\ref{eq:1-7}). Then there exists a function 
\begin{align*}
\tilde v_\ve(t,x_1) \in L^\infty(0,T; H^1(0,L)) \cap H^1(0,T; L^2(0,L))
\end{align*}
such that
\begin{enumerate}[(i)]
\item
For $t \in (0,T)$, the function $\tilde v_\ve$ approximates $[u_\ve]$:
\begin{align*}
\int_{\Gamma_\ve} |\tilde v_\ve - [u_\ve]|^2 ds \le C \ve \int_{\Omega_{i, \ve} \cup \Omega_{e, \ve}} |\nabla u_\ve|^2 dx.
\end{align*}
\item
There exists $v_0(t,x_1)\in L^\infty(0,T; L^2(0,L))$ such that along a subsequence $\tilde v_\ve$ converges to $v_0(t,x_1)$ uniformly on $[0,T]$, as $\ve \to 0$.
\end{enumerate}
\end{lemma}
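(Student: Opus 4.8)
The plan is to take for $\tilde v_\ve$ the difference of two cross-sectional averages of the bulk potential $u_\ve$, chosen so that the bulk gradient bound of Lemma~\ref{lm:apriori-est}(iii) controls the error on $\Gamma_\ve$. Since $\Omega_{i,\ve}=(0,L)\times\ve D_{r_0}$ is a straight cylinder, I set
\[
\bar u^{\,i}_\ve(t,x_1)=\frac{1}{|\ve D_{r_0}|}\int_{\ve D_{r_0}}u_\ve^i(t,x_1,x')\,dx'.
\]
I fix once and for all an open subset $W$ of the periodicity-cell cross-section which lies in the extracellular part $Y_e$ for \emph{every} $y_1\in(-\tfrac12,\tfrac12)$ (such a $W$ exists because the myelin sheath has bounded radial extent and $Y_e$ is connected around the axon), and put
\[
\bar u^{\,e}_\ve(t,x_1)=\frac{1}{|\ve W|}\int_{\ve W}u_\ve^e(t,x_1,x')\,dx',\qquad \tilde v_\ve:=\bar u^{\,i}_\ve-\bar u^{\,e}_\ve .
\]
By \eqref{eq:1-7} we get $\tilde v_\ve(t,0)=\tilde v_\ve(t,L)=0$. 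As the averaging domains $\ve D_{r_0}$ and $\ve W$ do not depend on $x_1$, $\partial_{x_1}\bar u^{\,l}_\ve$ is the average of $\partial_{x_1}u_\ve^l$; by Jensen's inequality, $|\ve D_{r_0}|\sim|\ve W|\sim\ve^2$ and Lemma~\ref{lm:apriori-est}(iii),
\[
\|\tilde v_\ve\|_{L^\infty(0,T;H^1(0,L))}^2\le \frac{C}{\ve^2}\,\sup_{t}\int_{\Omega_{i,\ve}\cup\Omega_{e,\ve}}\big(|u_\ve|^2+|\nabla u_\ve|^2\big)\,dx\le C
\]
uniformly in $\ve$; and for each fixed $\ve$, $\partial_t u_\ve\in L^2(0,T;H^1(\Omega_\ve\setminus\Gamma_\ve))$ (part of the regularity of the solution, cf.\ the references in Section~\ref{sec:apriori}), so $\partial_t\tilde v_\ve$ is the average of $\partial_t u_\ve^l$ and $\tilde v_\ve\in H^1(0,T;L^2(0,L))$, which gives the asserted function space.

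For (i), I would use a Poincar\'e--trace inequality on cross-sections. For a.e.\ $x_1$ inside a Ranvier-node interval the cross-section consists of the disk $\ve D_{r_0}$ (intracellular) together with a fixed $\ve$-scaled Lipschitz extracellular domain $\ve\omega_e$ joining the circle $\ve\partial D_{r_0}$ to $\ve W$. Scaling by $\ve$ the Poincar\'e--trace inequality on the unit disk, respectively on the fixed domain $\omega_e$, gives
\[
\int_{\ve\partial D_{r_0}}\!\!|u^i_\ve-\bar u^{\,i}_\ve|^2\,d\ell\le C\ve\!\!\int_{\ve D_{r_0}}\!\!|\nabla_{x'}u^i_\ve|^2\,dx',\qquad
\int_{\ve\partial D_{r_0}}\!\!|u^e_\ve-\bar u^{\,e}_\ve|^2\,d\ell\le C\ve\!\!\int_{\ve\omega_e}\!\!|\nabla_{x'}u^e_\ve|^2\,dx',
\]
with constants uniform in $\ve$ and over the periods because the microstructure is periodic and Lipschitz. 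Writing $\tilde v_\ve-[u_\ve]=(\bar u^{\,i}_\ve-u^i_\ve)-(\bar u^{\,e}_\ve-u^e_\ve)$ on $\Gamma_\ve$, integrating these estimates over the node intervals and summing over the $O(1/\ve)$ periods yields
\[
\int_{\Gamma_\ve}|\tilde v_\ve-[u_\ve]|^2\,ds\le C\ve\int_{\Omega_{i,\ve}\cup\Omega_{e,\ve}}|\nabla u_\ve|^2\,dx,
\]
which is (i); combined with Lemma~\ref{lm:apriori-est}(iii) it even gives $\ve^{-1}\int_{\Gamma_\ve}|\tilde v_\ve-[u_\ve]|^2\,ds\le C\ve^2\to0$, a sharper form that will be used afterwards.

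For (ii), I would argue by compactness in $C([0,T];L^2(0,L))$. Since $\tilde v_\ve(t,\cdot)$ depends only on $x_1$, $\ve^{-1}\int_{\Gamma_\ve}|\tilde v_\ve|^2\,ds$ is, up to constants, $\int_{E_\ve}|\tilde v_\ve|^2\,dx_1$, where $E_\ve\subset(0,L)$ is the union of the node intervals; using the $H^1(0,L)$-bound to cross the $O(\ve)$-long myelinated internodes one obtains, for every $\psi\in H^1(0,L)$,
\[
\|\psi\|_{L^2(0,L)}^2\le C\Big(\ve^{-1}\!\int_{\Gamma_\ve}|\psi|^2\,ds+\ve^2\,\|\psi'\|_{L^2(0,L)}^2\Big),
\]
with $C$ independent of $\ve$. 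Applying this to $\psi=\tilde v_\ve(t)-\tilde v_\ve(s)$, using the sharp form of (i), the $t$-equicontinuity of $v_\ve=[u_\ve]$ on $\Gamma_\ve$ furnished by Lemma~\ref{lm:apriori-est}(ii) (i.e.\ $\ve^{-1}\int_{\Gamma_\ve}|v_\ve(t)-v_\ve(s)|^2\,ds\le C|t-s|$ via Cauchy--Schwarz in time), and the uniform $L^\infty(0,T;H^1(0,L))$-bound, gives $\|\tilde v_\ve(t)-\tilde v_\ve(s)\|_{L^2(0,L)}^2\le C(|t-s|+\ve^2)$. Together with the precompactness of $\{\tilde v_\ve(t)\}_\ve$ in $L^2(0,L)$ for each $t$ (Rellich, from the $H^1(0,L)$-bound), Arzel\`a--Ascoli then extracts a subsequence converging in $C([0,T];L^2(0,L))$ to some $v_0\in C([0,T];L^2(0,L))\subset L^\infty(0,T;L^2(0,L))$. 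The step I expect to be the most delicate is precisely this norm comparison together with the uniformity, in $\ve$, of all the Poincar\'e--trace and Friedrichs constants used above: one has to exploit the periodic Lipschitz geometry to control the one-dimensional $L^2(0,L)$-norm of $\tilde v_\ve$ by its trace on the periodic surface $\Gamma_\ve$ plus the $H^1(0,L)$-seminorm, while keeping every constant independent of the small parameter.
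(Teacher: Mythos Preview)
Your construction is correct and genuinely different from the paper's. The paper does not take continuous cross--sectional averages; instead it covers $\Omega_\ve$ by overlapping periodicity cells $\ve Y_k$, defines the \emph{discrete} numbers
\[
\bar v_{\ve,k}=\frac{1}{|\ve\Gamma|}\int_{\ve\Gamma_k}[u_\ve]\,ds,
\]
proves via cell--wise Poincar\'e inequalities the two key estimates
\[
\int_{\ve\Gamma_k}|[u_\ve]-\bar v_{\ve,k}|^2\,ds\le C\ve\!\int_{\ve Y_{i,k}\cup\ve Y_{e,k}}\!|\nabla u_\ve|^2\,dx,
\qquad
\sum_k|\bar v_{\ve,k}-\bar v_{\ve,k+1}|^2\le\frac{C}{\ve}\!\int_{\Omega_{i,\ve}\cup\Omega_{e,\ve}}\!|\nabla u_\ve|^2\,dx,
\]
and then takes $\tilde v_\ve$ to be the piecewise linear interpolant of the $\bar v_{\ve,k}$. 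The $H^1(0,L)$ bound comes from the second estimate, and the $H^1(0,T;L^2(0,L))$ bound comes from $\partial_t\bar v_{\ve,k}=|\ve\Gamma|^{-1}\int_{\ve\Gamma_k}\partial_t[u_\ve]\,ds$, so only the surface regularity $\partial_t[u_\ve]\in L^2(0,T;L^2(\Gamma_\ve))$ of Lemma~\ref{lm:apriori-est}(ii) is used.

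Your cross--sectional averages give a cleaner one--step construction, and your Poincar\'e--trace argument for (i) and your norm--comparison route to Arzel\`a--Ascoli for (ii) are both sound. The one point where you lean on more than the paper actually proves is the membership $\tilde v_\ve\in H^1(0,T;L^2(0,L))$: you invoke $\partial_t u_\ve\in L^2(0,T;H^1(\Omega_\ve\setminus\Gamma_\ve))$ in the bulk, but the paper only establishes $\partial_t[u_\ve]\in L^2(0,T;L^2(\Gamma_\ve))$; the bulk time derivative is used formally in the a priori estimates but is not part of the solution space. The paper's discrete construction sidesteps this because $\partial_t\bar v_{\ve,k}$ is an integral of $\partial_t[u_\ve]$ over $\ve\Gamma_k$, not of $\partial_t u_\ve$ over a bulk cross--section. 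Your equicontinuity bound $\|\tilde v_\ve(t)-\tilde v_\ve(s)\|_{L^2(0,L)}^2\le C(|t-s|+\ve^2)$ is enough to extract a $C([0,T];L^2)$--convergent subsequence (the $\ve^2$ remainder is harmless for a sequence $\ve\to0$), so the substance of (ii) survives; but if you want the $H^1(0,T;L^2)$ statement exactly as written, you should either justify the bulk time regularity you cite or switch to the paper's interpolant.
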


\begin{proof}[Proof of Lemma \ref{lm:piecewise-const-v}]

Let us cover $\Omega_\ve$ into a union of overlapping cells $\ve Y_k$ as depicted in Figure \ref{fig:3}. We recall that $\Gamma$ 

\begin{figure}[h]
\centering{
\def\svgwidth{0.7\textwidth}
\input{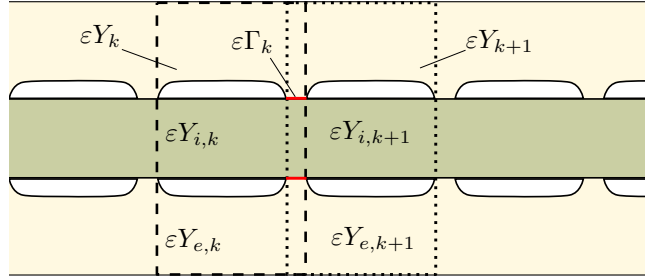}
%\caption{Simplified geometry of the cross-section of a myelinated axon}
}
\caption{Overlapping cells $Y_k$ covering $\Omega_\ve$.}\label{fig:3}
\end{figure}

We start by estimating the difference between the mean values of $[u_\ve]$ over $\ve \Gamma_{k}$ and $\ve \Gamma_{k+1}$. Let 
\begin{align*}
\bar u_{\ve, k}^l = \frac{1}{|\ve \Gamma|} \int_{\ve \Gamma_k} u_\ve^l ds, \quad l=i, e.
\end{align*}
For each $\ve Y_{l,k}$, $l=i,e$, we have 
\begin{align*}
\int_{\ve Y_{l, k}} |u_\ve^l - \bar u_{\ve,k}^l|^2 dx \le C\ve^2 \int_{\ve Y_{l, k}} |\nabla u_\ve^l|^2 dx
\end{align*}
owing to the Poicarï¿½ inequality, with $C$ independent of $\ve$. 
Considering traces on $\Gamma_k$ by simple scaling argument one has 
\begin{align}
\label{eq:est-6}
\int_{\ve \Gamma_k}  |u_\ve^l - \bar u_{\ve,k}^l|^2 ds
&\le
C\ve^{-1}\big( \int_{\ve Y_{l, k}}  |u_\ve^l - \bar u_{\ve,k}^l|^2 dx  + \ve^2 \int_{\ve Y_{l, k}} |\nabla u_\ve^l|^2 dx \big) \nonumber\\ 
&\le
C \ve \int_{\ve Y_{l, k}} |\nabla u_\ve^l|^2 dx, \quad l=i, e.
\end{align}
Then the difference between two averages $\bar u_{\ve,k}$ and $\bar u_{\ve,k+1}$ is estimated as follows
\begin{align*}
|\bar u_{\ve,k}^l - \bar u_{\ve,k+1}^l|^2
& \le \frac{2}{|\ve Y_{l,k} \cap \ve Y_{l,k+1}|} \int_{\ve Y_{l,k} \cap \ve Y_{l,k+1}} (|u_\ve^l - \bar u_{\ve,k}^l|^2+ |u_\ve^l - \bar u_{\ve,k+1}^l|^2) dx \\
& \le \frac{C}{\ve}\int_{\ve Y_{l,k} \cup \ve Y_{l,k+1}} |\nabla u_\ve^l|^2 dx. 
\end{align*}
Adding up in $k$ the above estimates we obtain an estimate in $\Omega_{l,\ve}$:
\begin{align}
\label{eq:est-diff-constants}
\sum_k |\bar u_{\ve,k}^l - \bar u_{\ve,k+1}^l|^2 \le \frac{C}{\ve} \int_{\Omega_{l,\ve}} |\nabla u_\ve^l|^2 dx.
\end{align}
Introduce the following notation
\begin{align*}
\bar v_{\ve,k} = \bar u_{\ve,k}^i - \bar u_{\ve,k}^e = \frac{1}{|\ve \Gamma|} \int_{\ve \Gamma_k} [u_\ve] ds.
\end{align*}
Then (\ref{eq:est-6}) and  (\ref{eq:est-diff-constants}) yield
\begin{align}
\label{eq:est-constants}
\int_{\ve \Gamma_k} |[u_\ve] - \bar v_{\ve,k}|^2 ds \le C\ve \int_{\ve Y_{i,k}\cup \ve Y_{e,k}}|\nabla u_\ve|^2 dx,\nonumber\\
\sum_k |\bar v_{\ve,k}- \bar v_{\ve,k+1}|^2 \le \frac{C}{\ve} \int_{\Omega_{i, \ve} \cup \Omega_{e, \ve}} |\nabla u_\ve|^2 dx.
\end{align}
Bounds (\ref{eq:est-constants}) show that $[u_\ve]$ in each cell $\ve Y_k$ is close to a constant $\bar v_{\ve,k}$, and the difference between $\bar v_{\ve,k}$ and $\bar v_{\ve,k+1}$ is small. 

Now we
construct a piecewise linear function $\tilde v_\ve(t, x_1)$ interpolating values $\bar v_{\ve,k}$ linearly and show that
\begin{align}
\label{eq:est-7}
&\int_0^L |\tilde v_\ve|^2 dx_1 \le C, \quad t\in(0,T),\\
\label{eq:est-8}
&\int_0^L | \partial_{x_1} \tilde v_\ve|^2 dx_1 \le C,\quad t\in(0,T),\\
\label{eq:est-9}
&\int_0^T \int_0^L |\partial_t \tilde v_\ve|^2 \, dx_1 dt \le C.
\end{align} 
Indeed, (\ref{eq:est-7}), (\ref{eq:est-8}) follow directly from (\ref{eq:est-constants}):
\begin{align}
\label{eq:est-10}
\int_0^L |\tilde v_\ve|^2 dx_1 & = \sum_k \int_{-\ve/2}^{\ve/2} \big|\frac{\bar v_{\ve,k}+ \bar v_{\ve,k+1}}{2} + x_1 \frac{\bar v_{\ve,k}- \bar v_{\ve,k+1}}{2\ve}\big|^2 dx_1 \nonumber\\
& \le
C\sum_k \ve (|\bar v_{\ve,k}|^2 + |\bar v_{\ve,k+1}|^2) \le C \ve \frac{1}{|\ve \Gamma|} \int_{\ve \Gamma_k} [u_\ve]^2 ds  \notag \\& \le C.
\end{align}
Estimate (\ref{eq:est-8}) is proved in a similar way using (\ref{eq:est-constants}):
\begin{align*}
\int_0^L |\partial_{x_1} \tilde v_\ve|^2 dx_1 & \le C \sum_k \int_{-\ve/2}^{\ve/2} \big|\frac{\bar v_{\ve,k}- \bar v_{\ve,k+1}}{\ve}\big|^2 dx_1 \\& \le \frac{C}{\ve} \sum_k |\bar v_{\ve,k}- \bar v_{\ve,k+1}|^2 \\
& \le C\ve^{-2} \int_{\Omega_{i, \ve} \cup \Omega_{e, \ve}} |\nabla u_\ve|^2 dx \le C.
\end{align*}

Let us prove (\ref{eq:est-9}). Differentiating $\bar v_{\ve,k}$ with respect to $t$, using the the Cauchy-Schwarz inequality yields
\begin{align*}
|\partial_t \bar v_{\ve,k}|^2 = \Big|\frac{1}{|\ve \Gamma_k|} \int_{\ve \Gamma_k} \partial_t [u_\ve]\,ds \Big|^2 
\le \frac{1}{|\ve \Gamma_k|} \int_{\ve \Gamma_k} (\partial_t [u_\ve])^2\,ds.
\end{align*} 
Similarly to (\ref{eq:est-10}), estimate (\ref{eq:est-9}) follows from the last bound and $(ii)$ in Lemma \ref{lm:apriori-est}.

%\begin{align*}
%\int_0^L |\partial_t \tilde [u_\ve]|^2 dx_1
%\le \sum_k \int_{-\ve/2}^{\ve/2} \big|\frac{\partial_t \bar v_{\ve,k}+ \partial_t  \bar v_{\ve,k+1}}{2} + x_1 \frac{\partial_t \bar v_{\ve,k}- \partial_t \bar v_{\ve,k+1}}{2\ve}\big|^2 dx_1 \\
%\le \sum_k \ve (|\partial_t \bar v_{\ve,k}|^2 + |\partial_t \bar v_{\ve,k+1}|^2) \le C \ve \frac{1}{|\ve \Gamma|} \int_{\ve \Gamma_k} |\partial_t [u_\ve]|^2 ds.
%\end{align*}
Estimate $(i)$ in the current lemma follows from (\ref{eq:est-constants}). 

The uniform convergence on $(0,T)$ of the constructed piecewise linear approximation is given by the Arzel\`a-Ascoli theorem. 
\begin{theorem}[Arzel\`a-Ascoli theorem]
Let $(X,d)$ be a compact metric space. Then a set $\mathcal F \subset C_0(X; E)$ is precompact (any sequence has a converging subsequence converging uniformly in $X$ to $f\in C_0(0,T; E)$, not necessarily in $\mathcal F$) provided 
\begin{enumerate}
\item
$\mathcal F(x)$ in precompact in $E$, for each $x \in X$.
\item
$\mathcal F$ is equicontinuous at each $x\in X$, that is for all $\gamma>0$ there exists $\delta=\delta(\gamma, x_0)$ so that
\begin{align*}
(\forall x\in X) [d(x,x_0)<\delta \quad \Rightarrow \quad (\forall f \in \mathcal F) \|f(x) - f(x_0)\| <\ve].
\end{align*}
\end{enumerate}
\end{theorem}
\bigskip
The first condition is guaranteed for $\tilde v_\ve$ due to (\ref{eq:est-8}), while the equicontinuity property follows from the bounds (\ref{eq:est-9}):
\begin{align*}
\ve^{-1}\int_{\Gamma_\ve} |\tilde v_\ve(t+\Delta t) - \tilde v_\ve(t) |^2 dx 
= \ve^{-1}\int_{\Gamma_\ve} \int_t^{t+\Delta t} \partial_\tau |\tilde v_\ve(\tau) - \tilde v_\ve(t) |^2 d\tau dx\\
=2\ve^{-1}  \int_{\Gamma_\ve} \int_t^{t+\Delta t} \partial_\tau \tilde v_\ve (\tilde v_\ve(\tau) - \tilde v_\ve(t)) d\tau dx \\
\le C\ve^{-1} \left(\int_{\Gamma_\ve} \int_t^{t+\Delta t} |\tilde v_\ve|^2 ds d\tau\right)^{1/2}
\left(\int_{\Gamma_\ve} \int_t^{t+\Delta t} |\partial_\tau \tilde v_\ve|^2 ds d\tau\right)^{1/2} \\
\le C \ve^{-1} \sqrt{\Delta t} \left(\int_{\Gamma_\ve}  |\tilde v_\ve|^2 ds \right)^{1/2}  \left(\int_{\Gamma_\ve} \int_t^{t+\Delta t} |\partial_\tau \tilde v_\ve|^2 ds d\tau\right)^{1/2} \le C \sqrt{\Delta t}.
\end{align*}
Applying Arzel\`a-Ascoli theorem completes the proof.
\end{proof}
%%%%%%%%%%%%%%%%%%%%%%%%%%%%%%%%%%%%%%%%%%%%%%%%%
\noindent\makebox[\linewidth]{\rule{\paperwidth}{0.4pt}}

\section{Auxiliary minimization problem}
\label{sec:aux}

We assume that the domains $Y_i$, $Y_m$, $Y_e$ are given in cylindric coordinates $(x_1, r,\phi)$ by
$(x_1,r)\in Y_i^\prime$, $(x_1,r)\in Y_m^\prime$, $(x_1,r)\in Y_e^\prime$. $Y_m^\prime$ is a simply connected domain whose boundary is naturally divided  into two parts $\Gamma_{mi}^\prime=\partial Y_m^\prime\cap \partial Y_i^\prime$ and  $\Gamma_{me}^\prime=\partial Y_m^\prime\cap \partial Y_e^\prime$. The first part is the segment $\{r_0\}\times (a,b)$, while the second one is a smooth curve which never intersects or touches $Y_i^\prime$ except at endpoints 
$A=(a,r_0)$ and $B=(b,r_0)$, and locally near these points it is given by $r=r_a(x_1)$ and $r=r_b(x_1)$.
Moreover, we assume that $r_a$ and $r_b$ are $C^2$-functions whose derivatives do not vanish at 
points $a$ and $b$.

\begin{figure}[h]
\centering{
\def\svgwidth{1.1\textwidth}
\input{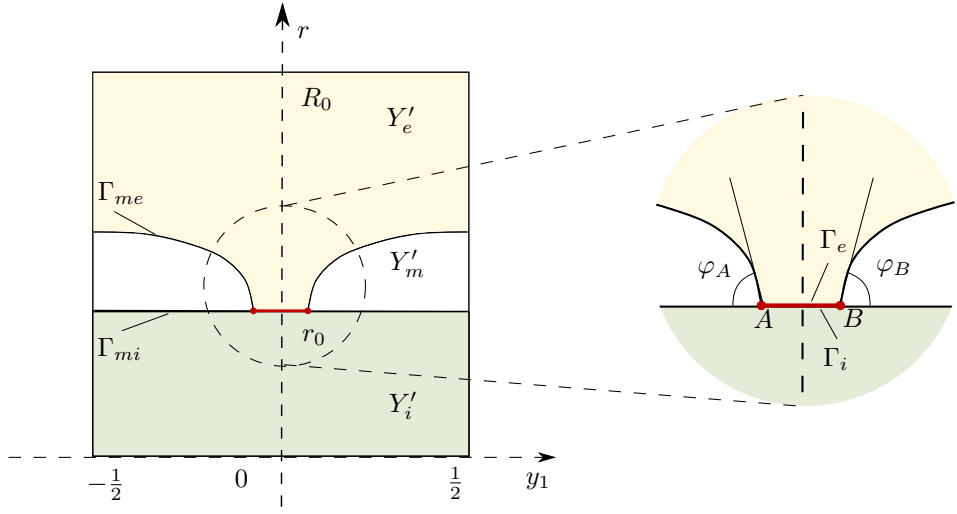}
%\caption{Simplified geometry of the cross-section of a myelinated axon}
\label{fig:1-0}
}
\caption{Cross-section of the periodicity cell in the neighborhood of a Ranvier node.}
\end{figure}

Let $\sigma_\delta$ be given by
$$
\sigma_\delta= \begin{cases}
\sigma_i\quad\text{in}\ Y_i,\\
\delta^2\quad\text{in}\ Y_{m},\\
\sigma_{e}\quad\text{in}\ Y_{e}.
\end{cases} 
$$
Consider the minimization problem 
\begin{equation}
\label{MinPrlTheta} 
\lambda_\delta =\inf_{\theta\in H^1_{\rm per}(Y\setminus \Gamma)}  \frac{\displaystyle\int_Y \sigma_\delta |\nabla \theta|^2 dx}{\displaystyle\int_{\Gamma }[\theta]^2ds },
\end{equation}
where the infimum is taken over $1$-periodic in $x_1$-variable functions 
$\theta \in H^1(Y\setminus \Gamma)$, $[\theta]$ denotes the jump of $\theta$ across $\Gamma$,
$[\theta]=\theta_{i}-\theta_{e}$, $\theta_{i}$  and $\theta_{e}$ being limit values (traces) of $\theta$ on $\Gamma$ from $Y_{i}$ and $Y_{e}$, correspondingly. It is easy to see that the infimum in \eqref{MinPrlTheta} is attained on a function $\theta_\delta$ which is defined up to a multiplicative and an additive constant, and $\theta_\delta$ satisfies 
\begin{align}
&{\rm div}\left(\sigma_\delta \nabla \theta_\delta\right)  =0 \quad  \text{in}\ Y\setminus \Gamma, \\
&\quad \left(\sigma_\delta \frac{\partial \theta_\delta}{\partial \nu}\right)_{i} = \left(\sigma_\delta \frac{\partial \theta_\delta}{\partial \nu}\right)_{e} =\lambda_\delta [\theta_\delta]\quad \text{on}\ \Gamma,  \\
&\quad \frac{\partial \theta_\delta}{\partial \nu}=0\quad \text{when}\ |x|=R_0.
\label{rivnyannya} 
\end{align}
Moreover, thanks to the radial symmetry $\theta_\delta =\theta_\delta(x_1,r)$ and
\begin{equation}
\label{MinPrlThetaRadial} 
\lambda_\delta =\frac{\displaystyle\int_{Y^\prime} \sigma_\delta |\nabla_{x_1,r} \theta_\delta|^2\,  r drdx_1}{\displaystyle\int_{\{r_0\}\times (a,b) }[\theta_\delta]^2 r_0 d x_1 }.
\end{equation}

\begin{lemma}
\label{Pervaya_zamechatelnaya_lemma}	
$ $

\begin{enumerate}[(i)]
	 \item
	 The infimum in \eqref{MinPrlTheta} admits the bound
	\begin{equation}
	\label{och_hor_bound}
	\lambda_\delta\leq \Lambda \delta 
	%+ O(\delta^{3/2}),
	\end{equation}
with $\Lambda>0$ independent of $\delta$. 
\item
 Let $\theta_\delta$ be normalized by 
	\begin{equation}
	\label{th_normaliz}
	\int_{\Gamma}[\theta_\delta]^2 ds=|\Gamma|, \  \int_{Y_e}\theta_\delta dx=0 \quad\text{and}\ \int_{Y_i}\theta_\delta dx\geq 0, 
	\end{equation}	
	then 
	\begin{equation}
	\label{abysho}
	\theta_\delta \rightharpoonup 1\quad\text{weakly in}\ H^1(Y_i),\quad  \theta_\delta \rightharpoonup 0 \quad\text{weakly in}\ H^1(Y_e), \quad \text{as} \ \delta\to 0,
	\end{equation}
	and the following uniform in $\delta>0$ bound holds:
	\begin{equation}
	\label{nuyakrutoi}
	|\theta_\delta|_{L^\infty(Y)}\leq C.
	\end{equation}
\end{enumerate}
\end{lemma}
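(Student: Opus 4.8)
The plan is to prove the quantitative bound (i) by an explicit construction and then to deduce (ii): the convergence \eqref{abysho} almost for free, the $L^\infty$ bound \eqref{nuyakrutoi} with considerably more effort.

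\textbf{Step 1: the bound $\lambda_\delta\le\Lambda\delta$.} I would exhibit a competitor in \eqref{MinPrlTheta}, which by radial symmetry may be taken of the form $\theta=\theta(x_1,r)$, with Rayleigh quotient $O(\delta)$. Away from the endpoints $A$, $B$ of $\Gamma_{mi}'=\{r_0\}\times(a,b)$, set $\theta\equiv 1$ on $Y_i$, $\theta\equiv 0$ on $Y_e$, and let $\theta$ interpolate linearly across the radial width of $Y_m$ between these values; since $\sigma_\delta=\delta^2$ on $Y_m$ and $Y_m$ is macroscopically thick, this costs energy $O(\delta^2)$ and gives $[\theta]=1$ on the corresponding part of $\Gamma$. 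The genuine obstruction is at $A$, $B$, where $\Gamma_{mi}'$ and $\Gamma_{me}'$ meet at a nonzero angle (because $r_a',r_b'$ do not vanish at $a,b$) and $Y_m$ pinches to zero width, so that maintaining the traces $1$ on $\Gamma_{mi}'$ and $0$ on $\Gamma_{me}'$ up to the corner would cost infinite energy. Near $A$, in polar coordinates $(\rho,\psi)$ centred at $A$ (the sector of opening $\pi$ being $Y_i$ and the two complementary sectors being $Y_m$ and $Y_e$), I would put, for $\rho_1<\rho<\rho_*$,
\[
\theta(\rho,\psi)=c+\big(f(\psi)-c\big)\,\frac{\ln(\rho/\rho_1)}{\ln(\rho_*/\rho_1)},\qquad \theta\equiv c\ \ \text{for }\rho<\rho_1,
\]
where $f$ is the piecewise-linear angular profile equal to $1$ on the $Y_i$-sector, $0$ on the $Y_e$-sector and linear on the $Y_m$-sector, $\rho_*$ is a fixed small radius, and $c\in(0,1)$, $\rho_1\in(0,\rho_*)$ are free. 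A direct computation (essentially two-dimensional, since $r$ is bounded away from $0$ near $A$) gives a corner energy of the form $C_1(c)\,\ell^{-1}+C_2\,\delta^2\ell$ with $\ell=\ln(\rho_*/\rho_1)$, where $C_2$ depends only on the myelin opening and $C_1(c)$ is quadratic in $c$ with coefficients $\sigma_i,\sigma_e$ times the respective sector openings; minimizing over $c$ and then over $\ell$ (the optimal choice being $\ell\sim\delta^{-1}$) makes each corner contribute order $\delta$, and the $C^2$-smoothness of $\Gamma_{me}'$ lets one absorb the discrepancy between the true geometry and the idealized sectors into lower-order errors. Since $[\theta]=\ln(\rho/\rho_1)/\ell\in[0,1]$ on $\Gamma$ inside the corner annuli and vanishes only on the tiny discs $\{\rho<\rho_1\}$, the denominator $\int_{\{r_0\}\times(a,b)}[\theta]^2r_0\,dx_1$ stays bounded below uniformly in $\delta$ (as $\rho_*$ is fixed), whence $\lambda_\delta\le\Lambda\delta$.

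\textbf{Step 2: the convergence \eqref{abysho}.} By scale invariance of the quotient the normalized minimizer satisfies $\int_Y\sigma_\delta|\nabla\theta_\delta|^2\,dx=\lambda_\delta|\Gamma|\le\Lambda\delta|\Gamma|\to 0$, so $\|\nabla\theta_\delta\|_{L^2(Y_i)}\to 0$ and $\|\nabla\theta_\delta\|_{L^2(Y_e)}\to 0$. With $\int_{Y_e}\theta_\delta\,dx=0$, the Poincar\'e--Wirtinger inequality on $Y_e$ gives $\theta_\delta\to 0$ strongly in $H^1(Y_e)$, hence the trace $\theta_{\delta,e}\to 0$ in $L^2(\Gamma)$. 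On $Y_i$, Poincar\'e--Wirtinger gives $\theta_\delta-\langle\theta_\delta\rangle_{Y_i}\to 0$ in $H^1(Y_i)$, where $\langle\cdot\rangle_{Y_i}$ is the mean; these means are bounded because $\|\theta_{\delta,i}\|_{L^2(\Gamma)}\le\|[\theta_\delta]\|_{L^2(\Gamma)}+\|\theta_{\delta,e}\|_{L^2(\Gamma)}=|\Gamma|^{1/2}+o(1)$ while, by the trace inequality, $\theta_{\delta,i}$ agrees with $\langle\theta_\delta\rangle_{Y_i}$ on $\Gamma$ up to $o(1)$ in $L^2(\Gamma)$. Along any subsequence $\langle\theta_\delta\rangle_{Y_i}\to c$, so $\theta_\delta\to c$ in $H^1(Y_i)$ and $[\theta_\delta]\to c$ in $L^2(\Gamma)$, whence $|\Gamma|=\int_\Gamma[\theta_\delta]^2\to c^2|\Gamma|$, i.e.\ $c^2=1$; the constraint $\int_{Y_i}\theta_\delta\ge 0$ selects $c=1$. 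Since every subsequence has a further subsequence with the same limit, the whole family converges (indeed strongly), which is \eqref{abysho}.

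\textbf{Step 3: the $L^\infty$ bound \eqref{nuyakrutoi} — the main obstacle.} Because $\mathrm{div}(\delta^2\nabla\theta_\delta)=0$ in $Y_m$, $\theta_\delta$ is harmonic there and the maximum principle gives $\|\theta_\delta\|_{L^\infty(Y_m)}\le\max\{\|\theta_\delta\|_{L^\infty(Y_i)},\|\theta_\delta\|_{L^\infty(Y_e)}\}$; moreover the strong maximum principle and Hopf's lemma exclude an interior extremum of $\theta_\delta$ in any of $Y_i,Y_m,Y_e$ and an extremum on the smooth parts of $\Gamma_{mi},\Gamma_{me}$ or on $\{|x|=R_0\}$ — in each case $\theta_\delta$ would be locally, hence globally, constant, contradicting $\int_\Gamma[\theta_\delta]^2=|\Gamma|$ or $\int_{Y_e}\theta_\delta=0$. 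It thus suffices to bound $\theta_\delta$ in $L^\infty(Y_i)$ and $L^\infty(Y_e)$, for which I would run a Stampacchia/De Giorgi truncation on the Euler--Lagrange identity $\int_Y\sigma_\delta\nabla\theta_\delta\cdot\nabla\phi=\lambda_\delta\int_\Gamma[\theta_\delta][\phi]$: testing with $\phi=(\theta_\delta-k)^+$ and using that $s\mapsto(s-k)^+$ is non-decreasing and $1$-Lipschitz yields
\[
\int_{\{\theta_\delta>k\}}\sigma_\delta|\nabla\theta_\delta|^2\le\lambda_\delta|\Gamma|^{1/2}\big(\|(\theta_{\delta,i}-k)^+\|_{L^2(\Gamma)}+\|(\theta_{\delta,e}-k)^+\|_{L^2(\Gamma)}\big),
\]
after which one iterates over levels $k\ge 2$ with the uniform trace embedding $H^1(Y_l)\hookrightarrow L^2(\Gamma)$ and the Poincar\'e inequality on the super-level sets, whose constants are uniform precisely because, by Step 2, $|\{\theta_{\delta,i}>2\}\cap Y_i|\to 0$ and $|\{\theta_{\delta,e}>2\}\cap Y_e|\to 0$; the lower bound follows from the analogous scheme for $(-\theta_\delta-k)^+$. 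The smallness $\lambda_\delta=O(\delta)$ is exactly what renders the ``wrong-sign'' Robin term on $\Gamma$ a negligible perturbation and keeps the constants $\delta$-independent. An alternative route is to bound $\theta_\delta$ away from the corners by interior estimates for harmonic functions, boundary regularity for the weak-coupling Robin problem on $\Gamma$, and contrast-independent regularity for the flat transmission interfaces $\Gamma_{mi},\Gamma_{me}$, and then to control $\theta_\delta$ near $A$, $B$ by the maximum principle for the transmission problem together with an explicit harmonic barrier in the sector, using again $\lambda_\delta\to 0$. Either way, carrying this estimate through so that every constant is genuinely uniform in $\delta$ up to the Lipschitz corners $A$, $B$ is the delicate part of the lemma.
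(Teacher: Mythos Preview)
Your outline is sound and the conclusions are correct; the strategy differs from the paper's in two places worth noting.

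For (i) the paper also builds an explicit competitor, but near the corners $A,B$ it uses separable harmonic profiles $\rho^{\alpha_\delta\delta}$ times piecewise trigonometric angular factors, with $\alpha_\delta$ fixed by a flux-matching transcendental equation, in place of your logarithmic radial interpolation. Both give $\lambda_\delta=O(\delta)$; the advantage of the paper's more elaborate construction is that it identifies the sharp leading coefficient $\overline\Lambda$, and the very same test function is then reused in Lemma~\ref{Vtoraya_zamechatelnaya_lemma} to prove the matching lower bound $\lambda_\delta\ge\overline\Lambda\delta+o(\delta)$. Your simpler competitor suffices for the present lemma but would have to be upgraded later.

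For the $L^\infty$ bound the paper runs a Moser iteration rather than De~Giorgi: one tests with $\theta_\delta|\theta_\delta|^{p-2}$, bounds the jump term by $2\lambda_\delta\int_{\Gamma_i\cup\Gamma_e}|\theta_\delta|^p$, and uses $H^1(Y_l)\hookrightarrow H^{1/2}(\Gamma)\hookrightarrow L^{2q}(\Gamma)$ for some $q>1$ to obtain the self-improving estimate $\|\theta_\delta\|_{L^{pq}(\Gamma)}\le(Cp)^{1/p}\|\theta_\delta\|_{L^p(\Gamma)}$; iterating in $p$ and invoking the maximum principle in the bulk finishes the proof. This is cleaner here because the iteration lives entirely on $\Gamma$ and needs no smallness of super-level sets. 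Your Stampacchia scheme also closes, but the trace $H^1\hookrightarrow L^2(\Gamma)$ you invoke is not enough on its own: to obtain a superlinear recursion $|\Gamma_h|\le C(h-k)^{-q}|\Gamma_k|^{\beta}$ with $\beta>1$ one needs $H^{1/2}(\Gamma)\hookrightarrow L^q(\Gamma)$ with $q>4$, which is available precisely because the radial symmetry makes $\Gamma$ effectively one-dimensional (without this, the critical exponent $q=4$ yields only $\beta=1$ and the iteration stalls).
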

\begin{proof}
	(i) 
	% \textcolor{red}{ In  the general case 
	%one can pass to a smaller domain  $\tilde Y_m^\prime\subset Y_m^\prime$ (and bigger ) to obtain the %upper  bounds as explained in the Remark \ref{REM_GEN_CASE}. }
	We begin by constructing an approximation of $\theta_\delta$ away from points $A$ and $B$. There exists a  function $\Theta\in C^2_{\rm loc}(Y^\prime\setminus \Gamma^\prime)$ such that 
$$
0\leq \Theta \leq 1,\text{and}\quad  \Theta=1\quad\text{in}\ Y^\prime_i,\ \Theta=0 \quad\text{in}\ Y^\prime_e,
$$
$$
 \ |\nabla \Theta(x^\prime)|\leq \frac{C}{{\rm dist} (x^\prime,\{A\}\cup\{B\})},    
 \quad \|\nabla^2 \Theta(x^\prime)\|\leq \frac{C}{{\rm dist}^2 (x^\prime,\{A\}\cup\{B\})},  
$$
	where $\|\nabla^2 \Theta\|$ denotes norm of the Hessian of $\Theta$. Since $|\nabla \Theta|$ blows 
	up at points $A$ and $B$ with the rate $1/{\rm dist} (x^\prime,\{A\})$ and 
	$1/{\rm dist} (x^\prime,\{B\})$, any such a function $\Theta$ does not belong to 
	$H^1(Y\setminus \Gamma)$, hence it is to be  corrected near endpoints $A$ and $B$ of $\Gamma^\prime$. For simplicity we assume that in a neighborhood of points $A$ and $B$ the boundary of domain $Y_m^\prime$ is formed by two rays with angles $\varphi_A$ and $\varphi_B$. 
	
	Consider the $\delta$-neighborhood $D_{\delta}(B)$ of the point $B$ and pass to polar coordinates $(\rho,\varphi)$ with the center at $B$. 
	Note that for sufficiently small $\delta$ the set $Y_m^\prime\cap D_{\delta}(B)$ is a circular sector given by $0<\varphi<\varphi_B$ and $0<\rho<\delta$. We set 
\begin{equation}
	\label{CONSRUSTIONOFTESTFUNCTION} 
	\theta_\delta^A =\frac{1}{1-V_\delta}
	\begin{cases}\displaystyle
	\rho^{\alpha_\delta\delta}\cos(\alpha_\delta\delta (\varphi+\pi))/\cos(\alpha_\delta\delta \pi)-V_\delta,\quad \  -\pi<\varphi\leq 0\\
	\displaystyle
	\rho^{\alpha_\delta \delta}\left(1-\frac{\alpha_\delta\sigma_i}{\delta } \tan(\alpha_\delta\delta \pi)\varphi \right)-V_\delta,
	\quad  \  0<\varphi\leq \varphi_B\\
	\displaystyle
	V_\delta \left(\rho^{\alpha_\delta\delta}\cos(\alpha_\delta\delta (\varphi-\pi)/\cos(\alpha_\delta\delta(\varphi_B-\pi))-1\right),
	\quad \  \varphi_B<\varphi< \pi,
	\end{cases}
\end{equation}
with 
\begin{equation}
\label{defofVdelta}
V_\delta:=1-\frac{\alpha_\delta\sigma_i}{\delta } \tan(\alpha_\delta \delta \pi)\varphi_B
\end{equation}
and $\alpha_\delta$ solving the transcendental equation
	\begin{equation}
	\label{CONSRN1}
	\alpha_\delta\sigma_i \tan(\alpha_\delta\delta \pi)=
	\sigma_e \alpha_\delta\tan(\alpha_\delta \delta(\varphi_B-\pi))
	\left(1-\frac{\alpha_\delta \sigma_i}{\delta } \tan(\alpha_\delta\delta \pi)\varphi_B \right).
	\end{equation}
There is a unique solution $\alpha_\delta$ of \eqref{CONSRN1} on $(0, 1/(2\delta))$ and it is asymptotically given by
$$
\alpha_\delta=\frac{1}{\sqrt{\varphi_B}}\sqrt{\frac{1}{\sigma_i \pi} +\frac{1}{\sigma_e(\pi-\varphi_B)}}+O(\delta^2), \,\, \delta \to 0.
$$	
Note that 	$\theta_\delta^B$ is continuous on $\mathbf{R}^2\setminus \mathbf{R}_{-}$ and
\begin{align*}
\lim_{\varphi \to \pm\pi}  \frac{\partial \theta_\delta^B}{\partial \varphi}=0,\quad \text{and}\ 
\lim_{\varphi \to -0} r\sigma_\delta \frac{\partial \theta_\delta^B}{\partial \varphi}=\lim_{\varphi \to +0}  r\sigma_\delta\frac{\partial \theta_\delta^B}{\partial \varphi},\\
\ \lim_{\varphi \to \varphi_B-0}  
		r\sigma_\delta\frac{\partial \theta_\delta^B}{\partial \varphi}=\lim_{\varphi \to  \varphi_B+0}  r\sigma_\delta\frac{\partial \theta_\delta^B}{\partial \varphi}.
\end{align*}

Now  consider the $\delta$-neighborhood of the point $A$ and define the function $\theta_\delta^A$ by replacing $\varphi$ with $\pi-\varphi$ and $\varphi_B$ with $\varphi_A$ in 
\eqref{CONSRUSTIONOFTESTFUNCTION}-\eqref{CONSRN1}, and redefining  $\alpha_\delta$ and $V_\delta$ accordingly.
To glue 
$\Theta$, $\theta_\delta^A$ and $\theta_\delta^B$ together introduce a cut-off function $\chi\in C^\infty(\mathbf{R})$ such that $\chi(\rho)=0$ for $\rho\geq 1$ and $\chi(\rho)=1$ for $\rho\leq 1/2$.  
Set 
$$
\tilde \theta_\delta = (1-\chi(|x^\prime - A|/\delta)-\chi(|x^\prime - B|/\delta))\Theta +
\chi(|x^\prime - A|/\delta)\theta_\delta^A+ 
\chi(|x^\prime - B|/\delta)\theta_\delta^B,
$$
and  use $\tilde \theta_\delta$ as a test function in  \eqref{MinPrlThetaRadial}. Direct computations 
 yield the bound  \eqref{och_hor_bound}. Indeed, by properties of $\Theta$
 \begin{align*}
 \int_{Y^\prime} \sigma_\delta |\nabla_{x_1,r} \tilde \theta_\delta|^2\, r dx_1 dr&=O(\delta^2\log (1/\delta))+
 \int_{D_\delta(A)} \sigma_\delta |\nabla_{x_1,r}\tilde  \theta_\delta|^2\,  r dx_1 dr\\
 &+
\int_{D_\delta(B)} \sigma_\delta |\nabla_{x_1,r} \tilde \theta_\delta|^2\,  r dx_1 dr.
 \end{align*} 
The last two integrals are similar and we consider only the second one:
\begin{align}
\notag
&\int_{D_\delta(B)} \sigma_\delta |\nabla_{x_1,r} \tilde \theta_\delta|^2\,  r dx_1 dr=
\frac{\sigma_i\pi(r_0+O(\delta))}{(1-V_\delta)^2}\int_0^\delta \left(\chi^\prime\big(\frac{\rho}{\delta}\big)\rho^{\alpha_\delta\delta} /\delta +\alpha_\delta\delta\chi\big(\frac{\rho}{\delta}\big)\rho^{\alpha_\delta\delta-1}\right)^2\rho d\rho\\
\notag
&+\frac{\delta^2}{\varphi_B}\int_0^\delta \chi^2(\rho/\delta)\rho^{2\alpha_\delta\delta-1}d\rho\\
\notag
&+\frac{\sigma_e V_\delta^2(\pi-\varphi_B)(r_0+O(\delta))}{(1-V_\delta)^2}\int_0^\delta \left(\chi^\prime(\rho/\delta)(\rho^{\alpha_\delta\delta} /\delta +\alpha_\delta\delta\chi(\rho/\delta)\rho^{\alpha_\delta\delta-1}\right)^2\rho d\rho+O(\delta^2)\\
\notag
&=2r_0\frac{\delta^2}{\varphi_B}\int_0^{\delta/2}\rho^{2\alpha_\delta\delta-1} d\rho+O(\delta^2\log^2(1/\delta))\\
\notag
&= \left(\varphi_A/(\sigma_e(\pi-\varphi_A))+\varphi_A/(\sigma_i \pi)\right)^{-1/2}\delta +O(\delta^2\log^2(1/\delta)).
\end{align}
Also,
 \begin{equation*}
% \label{Nuochenochenochevidno}
 \int_{\Gamma^\prime}  [\tilde \theta_\delta]^2 d x_1= (b-a) +O(\delta),
 \end{equation*}
thus 
\begin{equation}
\label{Ots_sverhuformula}
\lambda_\delta \leq \overline{\Lambda}\delta+O(\delta^2\log^2(1/\delta)),
\end{equation}
where $\overline \Lambda$ is given by 
\begin{equation}
\label{och_hor_formulaopyat}
\overline\Lambda= \frac{1}{b-a}
\left(\left(\frac{\varphi_A}{\sigma_e(\pi-\varphi_A)}+\frac{\varphi_A}{\sigma_i \pi}\right)^{-1/2}+
\left(\frac{\varphi_B}{\sigma_e(\pi-\varphi_B)}+\frac{\varphi_B}{\sigma_i \pi}\right)^{-1/2}\right).
\end{equation}

(ii) Convergences in \eqref{abysho} easily follow from the bound \eqref{och_hor_bound} and the Poincare  inequality. To prove \eqref{nuyakrutoi} multiply the equation in \eqref{rivnyannya} by $\theta_\delta|\theta_\delta|^{p-2}$, $p\geq 2$, and integrate over $Y\setminus\Gamma$  to find, after integrating by parts, 
$$
(p-1)\int_{Y}\sigma_\delta |\nabla \theta_\delta|^2 |u|^{p-2}=\lambda_\delta
\int_{\Gamma}[\theta_\delta]\,[\theta_\delta |\theta_\delta|^{p-2}]ds.
$$
Therefore we have 
$$
(p-1)\int_{Y}\sigma_\delta |\nabla \theta_\delta|^2 |\theta_\delta|^{p-2}\leq 2\lambda_\delta
\int_{\Gamma_i\cup\Gamma_e}|\theta_\delta|^p ds,
$$
where  $\Gamma_i$ and $\Gamma_e$ denote opposite sides of the surface $\Gamma$.
Thus for $p\geq 2$ it holds 
$$
\frac{p-1}{p^2}\left( \int_{Y}\sigma_\delta |\nabla |\theta_\delta|^{p/2}|^2+
\int_{\Gamma_i\cup\Gamma_e}|\theta_\delta|^p \right)\leq
C\int_{\Gamma_i\cup\Gamma_e}|\theta_\delta|^p
$$
with $C$ independent of $\delta$ and $p\geq 2$. This yields $H^1$-bounds for $|\theta_\delta|^{p/2}$
in $Y_i$ and $Y_e$, that in turn lead to bounds for traces of $|\theta_\delta|^{p/2}$ on $\Gamma_i$ and $\Gamma_e$:
$$
\| |\theta_\delta|^{p/2}\|^2_{H^{1/2}(\Gamma_i\cup\Gamma_e)}\leq
C_1p\|\theta_\delta\|^p_{L^p(\Gamma_i\cup\Gamma_e)}.
$$
Since  $H^{1/2}(\Gamma)$ is continuously embedded in 
$L^{2q}(\Gamma)$  for some $q>1$ (optimal  $q=2$) we have
$$
\| |\theta_\delta|^{p/2}\|^2_{L^{2q}(\Gamma_i\cup\Gamma_e)}\leq
C_2p\|\theta_\delta\|^p_{L^p(\Gamma_i\cup\Gamma_e)}
$$
or
\begin{equation}
\label{iterative}
\| \theta_\delta\|_{L^{pq}(\Gamma_i\cup\Gamma_e)}\leq (C_2p)^{1/p}
\|\theta_\delta\|_{L^p(\Gamma_i\cup\Gamma_e)}.
\end{equation}
It  follows from \eqref{och_hor_bound} and \eqref{th_normaliz} that
 \begin{equation*}
% \label{iterative_beginning}
 \| \theta_\delta\|_{L^{2}(\Gamma_i\cup\Gamma_e)}\leq C_3.
\end{equation*}
Then iterative use of \eqref{iterative} yields 
$$
\| u\|_{L^{2q^{k+1}}(\Gamma_i\cup\Gamma_e)}\leq
C_3 \, \exp\big({\frac{1}{2}\sum_0^k\log(2C_2q^j)/q^{j}} \big)
$$
for every integer $k\geq 0$. The series $\sum_1^\infty\log(C_2q^j)/q^{j}$ converges, hence
$\|\theta_\delta\|_{L^\infty(\Gamma_i\cup\Gamma_e)}\leq C$. Finally by the maximum principle $\theta_\delta$ satisfies the same $L^\infty$-bound on $Y\setminus \Gamma$.
\end{proof} 

%\begin{remark}
%	\label{REM_GEN_CASE}
%	
%\end{remark}

Next we show that the bound \eqref{och_hor_bound}  for $\lambda_\delta$ is in fact precise to the leading order.
%bound \eqref{och_hor_bound} 
%is in fact precise 
%and improve the convergence result \eqref{abysho}. 
%It is convenient here 
%to renormalize $\theta_\delta$   so that
%%change the normalization \eqref{th_normaliz} to 
%\begin{equation}
%\label{th_normaliz_new}
%  \int_{Y_i}\theta_\delta dx=|Y_i|, \quad \int_{Y_e}\theta_\delta dx=0.
%\end{equation}	

\begin{lemma} The following asymptotic result holds:
\begin{equation}
\label{Esche_luchshe_ravenstvo}
\lambda_\delta=\overline{\Lambda} \delta + O\left(\sqrt{\delta^3\log^3 (1/\delta)}\right),
\end{equation}
where $\overline \Lambda$ is given by  \eqref{och_hor_formulaopyat}.
\label{Vtoraya_zamechatelnaya_lemma}
%(ii) Under the normalization 
%\eqref{th_normaliz_new}
%\begin{equation}
%	\label{neabysho}
%	\frac{1}{\delta^{1/2}}(\theta_\delta-1) \rightharpoonup 0\quad\text{weakly in}\ H^1(Y_i),\quad 	\frac{1}{\delta^{1/2}}\theta_\delta \rightharpoonup 0 \quad\text{weakly in}\ H^1(Y_e).
%\end{equation}
\end{lemma}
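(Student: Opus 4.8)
\emph{Proof plan.} The upper bound is already in hand: \eqref{Ots_sverhuformula} gives $\lambda_\delta\le\overline\Lambda\delta+O(\delta^2\log^2(1/\delta))$, and since $\delta^2\log^2(1/\delta)=o\big(\sqrt{\delta^3\log^3(1/\delta)}\big)$, it remains to prove the matching lower bound $\lambda_\delta\ge\overline\Lambda\delta-C\sqrt{\delta^3\log^3(1/\delta)}$. I would obtain it by a \emph{complementary‑energy} (dual) argument: exhibit an explicit divergence‑free field $Q_\delta$ carrying unit flux through the Ranvier surface $\Gamma$ and concentrated near the two contact circles $A$ and $B$, whose weighted energy $\int_Y\sigma_\delta^{-1}|Q_\delta|^2$ reproduces the constant $\overline\Lambda$ to leading order; this is the conjugate of the construction of $\tilde\theta_\delta$ used for the upper bound in Lemma~\ref{Pervaya_zamechatelnaya_lemma}.

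First I would record what is needed about a minimizer $\theta_\delta$ of \eqref{MinPrlTheta}, normalized by \eqref{th_normaliz}. From \eqref{och_hor_bound}, $\int_Y\sigma_\delta|\nabla\theta_\delta|^2=\lambda_\delta|\Gamma|\le C\delta$, hence $\|\nabla\theta_\delta\|_{L^2(Y_i)}+\|\nabla\theta_\delta\|_{L^2(Y_e)}\le C\sqrt\delta$. Combining the Poincar\'e inequality on $Y_i$ and $Y_e$, the trace inequality on $\Gamma$, the normalization $\int_{Y_e}\theta_\delta=0$ and the sign condition $\int_{Y_i}\theta_\delta\ge0$, one gets $\|\theta_\delta-1\|_{H^1(Y_i)}+\|\theta_\delta\|_{H^1(Y_e)}\le C\sqrt\delta$ for $\delta$ small, and therefore $\int_\Gamma[\theta_\delta]\,ds=|\Gamma|+O(\sqrt\delta)$.

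Next, the dual inequality. For any $Y$‑periodic field $Q\in L^2(Y;\mathbf R^3)$ with $\mathrm{div}\,Q=0$ in $Y$ (distributionally, so that $\int_{Y\setminus\Gamma}Q\cdot\nabla\theta_\delta+\int_\Gamma(Q\cdot\nu)[\theta_\delta]\,ds=0$) and $Q\cdot\nu=0$ on $\Sigma$, completing the square $0\le\int_{Y\setminus\Gamma}\sigma_\delta|\nabla\theta_\delta-\sigma_\delta^{-1}Q|^2$ and optimizing the scaling of $Q$ gives
\[
\int_Y\sigma_\delta|\nabla\theta_\delta|^2\ \ge\ \frac{\big(\int_\Gamma(Q\cdot\nu)[\theta_\delta]\,ds\big)^2}{\int_Y\sigma_\delta^{-1}|Q|^2}.
\]
I would take $Q=Q_\delta$ with $Q_\delta\cdot\nu\equiv-1$ on $\Gamma$, built by (a) using, in $\delta$‑neighbourhoods of $A$ and $B$, the solenoidal fields conjugate to the separable solutions \eqref{CONSRUSTIONOFTESTFUNCTION} (divergence‑free in each phase, matched normal components across $\Gamma_{mi}$, $\Gamma_{me}$, prescribed normal trace on $\Gamma$), and (b) closing them up by a solenoidal correction in the bulk that routes the returning flux through the thin myelin wedges at $A$, $B$. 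The key computation, the complementary‑energy counterpart of the one in the proof of Lemma~\ref{Pervaya_zamechatelnaya_lemma} (the logarithmic blow‑up of $Q_\delta$ in the thin wedges being tamed by the weight $\sigma_\delta^{-1}=\delta^{-2}$ there), gives $\int_Y\sigma_\delta^{-1}|Q_\delta|^2=\dfrac{|\Gamma|}{\overline\Lambda\,\delta}\big(1+O(\delta\log^2(1/\delta))\big)$ with $\overline\Lambda$ as in \eqref{och_hor_formulaopyat}. Combining with $\int_\Gamma[\theta_\delta]=|\Gamma|+O(\sqrt\delta)$ and dividing by $|\Gamma|$,
\[
\lambda_\delta=\frac{\int_Y\sigma_\delta|\nabla\theta_\delta|^2}{|\Gamma|}\ \ge\ \frac{\big(|\Gamma|+O(\sqrt\delta)\big)^2}{|\Gamma|\cdot\frac{|\Gamma|}{\overline\Lambda\delta}\big(1+o(1)\big)}\ =\ \overline\Lambda\,\delta\,\big(1-o(1)\big),
\]
which with \eqref{Ots_sverhuformula} proves \eqref{Esche_luchshe_ravenstvo}.

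The hard part is the explicit construction of the solenoidal $Q_\delta$ and the leading‑order evaluation of $\int_Y\sigma_\delta^{-1}|Q_\delta|^2$: this is the ``dual'' of the delicate part of Lemma~\ref{Pervaya_zamechatelnaya_lemma} (the corners at $A$, $B$, the $C^2$ myelin boundary, the interplay of the thin wedges with the $\delta^{-2}$ weight, and the matching with the bulk return flux), and it is where the error of order $\sqrt{\delta^3\log^3(1/\delta)}$ is produced. A minor technical point is justifying the integration‑by‑parts identity for $\theta_\delta\in H^1(Y\setminus\Gamma)$ regarded as a $BV$ function across $\Gamma$. One can alternatively argue more directly — localizing the Dirichlet integral of $\theta_\delta$ near $A$ and $B$ and comparing with the profiles \eqref{CONSRUSTIONOFTESTFUNCTION} on circles of a suitably chosen radius — but that requires controlling boundary‑layer terms on those circles and leads to the same conclusion.
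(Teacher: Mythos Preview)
Your approach is genuinely different from the paper's. The paper does not use a complementary--energy dual bound; instead it tests the Euler--Lagrange equation satisfied by the actual minimizer $\theta_\delta$ against the explicit approximation $\tilde\theta_\delta$ from Lemma~\ref{Pervaya_zamechatelnaya_lemma}. Since normal derivatives of $\tilde\theta_\delta$ vanish on both sides of $\Gamma'$ and the fluxes $r\sigma_\delta\partial_\nu\tilde\theta_\delta$ are continuous across $\partial Y_m'$, integration by parts gives the \emph{identity}
\[
0=\lambda_\delta\, r_0\!\int_{\Gamma'}[\theta_\delta]\,[\tilde\theta_\delta]\,dx_1+\int_{Y'\setminus\Gamma'}\mathrm{div}\,(r\sigma_\delta\nabla\tilde\theta_\delta)\,\theta_\delta\,dr\,dx_1,
\]
so the upper and lower bounds come out together once both terms are computed. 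The first term is $(b-a)+O(\sqrt\delta)$ by the $H^1$ bounds you also derived. For the second, the paper splits $D_\delta(A)$ and $D_\delta(B)$ into sectors, records pointwise bounds on $\mathrm{div}(r\sigma_\delta\nabla\tilde\theta_\delta)$ in each, and controls $\theta_\delta-\tau_i$ (resp.\ $\theta_\delta-\tau_e$) on the annular sectors $S_{i,1}$, $S_{e,1}$ by Hardy's inequality; this is precisely where the $\sqrt{\delta^3\log^3(1/\delta)}$ error arises. The paper's route thus recycles $\tilde\theta_\delta$ and avoids entirely the step you flag as ``the hard part'': building a globally solenoidal $Q_\delta$ that routes the return flux through the myelin wedges at $A$, $B$ while keeping $\int_Y\sigma_\delta^{-1}|Q_\delta|^2$ under control (the weight $\delta^{-2}$ in $Y_m$ makes this a delicate balance, and you have not verified that the claimed leading order $|\Gamma|/(\overline\Lambda\delta)$ is actually achieved). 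Your dual scheme is sound in principle and would give the result if that construction goes through; the alternative you mention at the end---localizing the Dirichlet integral of $\theta_\delta$ near $A$ and $B$---is in spirit much closer to what the paper actually does.
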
 	
	
\begin{proof} We use the test function $\tilde \theta_\delta$ constructed in the proof of  Lemma \ref{Pervaya_zamechatelnaya_lemma}. Since normal derivatives  of $\tilde \theta_\delta$ vanish on both sides of $\Gamma^\prime$
and fluxes $r\sigma_\delta \frac{\partial \tilde \theta_\delta}{\partial \nu}$ are continuous across $\partial Y_m^\prime$	we have
\begin{equation}
\label{Vychisleniya_po_chastyam}
0=\int_{Y^\prime\setminus \Gamma^\prime} {\rm div} (r\sigma_\delta\nabla \theta_\delta)\tilde \theta_\delta drdx_1=
\lambda_\delta r_0 \int_{\Gamma^\prime} [\theta_\delta]\, [\tilde \theta_\delta] d x_1+\int_{Y^\prime\setminus \Gamma^\prime} {\rm div} (r\sigma_\delta\nabla\tilde \theta_\delta) \theta_\delta drdx_1.
\end{equation}
It follows from the bound \eqref{och_hor_bound} and normalization conditions \eqref{th_normaliz}
that $\|[\theta_\delta]-1\|_{L^2(\Gamma^\prime)}^2\leq C\delta$, direct calculations also show that
$\|[\tilde \theta_\delta]-1\|_{L^2(\Gamma^\prime)}^2\leq C\delta$, thus
\begin{equation}
\label{Nuochenochevidno}
\int_{\Gamma^\prime} [\theta_\delta]\, [\tilde \theta_\delta] d x_1= (b-a) +O(\delta^{1/2})
\end{equation}
Next we perform asymptotic calculations for the second term in the right hand side of \eqref{Vychisleniya_po_chastyam}. Split the domain $Y^\prime$ into $Z_\delta :=Y^\prime\setminus (D_{\delta}(A)\cup D_{\delta}(B))$  and two disks $D_{\delta}(A)$,  $D_{\delta}(B)$. Since  $\tilde \theta_\delta=\Theta$ in $Z_\delta$, using properties of $\Theta$
and the $L^\infty$-bound \eqref{nuyakrutoi} for $\theta_\delta$ we get 
\begin{equation}
\label{Tolstayaoblast}
\int_{Z_\delta \setminus \Gamma^\prime} {\rm div} (r\sigma_\delta\nabla\tilde \theta_\delta) \theta_\delta drdx_1 =O(\delta^2\log(1/\delta)).
\end{equation}
Next we show that 
\begin{align}
&\int_{D_{\delta}(A)\setminus \Gamma^\prime} {\rm div} (r\sigma_\delta\nabla\tilde \theta_\delta) \theta_\delta drdx_1 + 
\int_{D_{\delta}(B)\setminus \Gamma^\prime} {\rm div} (r\sigma_\delta\nabla\tilde \theta_\delta) \theta_\delta drdx_1\\
&=
r_0\delta	\left(\varphi_A/(\sigma_e(\pi-\varphi_A))+\varphi_A/(\sigma_i \pi)\right)^{-1/2}\notag\\
\label{Melkieoblasti}
&+
r_0\delta\left(\varphi_B/(\sigma_e(\pi-\varphi_B))+\varphi_B/(\sigma_i \pi)\right)^{-1/2}
+O\left(\sqrt{\delta^3\log^3 (1/\delta)}\right).
\end{align}
It suffices to consider only the integral over $D_{\delta}(B)\setminus \Gamma^\prime$. We 
pass to polar coordinates $(\rho, \varphi)$ with the center at $B$ and split the domain $D_{\delta}(B)\setminus \Gamma^\prime$ into the five subdomains:
 $$
 S_{i,1}=\{(\rho, \varphi):\, -\pi< \varphi<0, \  \delta/2\leq\rho<\delta \},\quad
 S_{i,2}=\{(\rho, \varphi):\, -\pi< \varphi<0, \  \rho<\delta/2 \},
 $$
$$
S_{e,1}=\{(\rho, \varphi):\, \varphi_B< \varphi<\pi, \  \delta/2\leq\rho<\delta \},\quad
S_{e,2}=\{(\rho, \varphi):\, \varphi_B< \varphi<\pi, \  \rho<\delta/2 \},
$$
and 
$$
S_{m,1}=\{(\rho, \varphi):\, 0< \varphi<\varphi_B, \  \delta/2\leq\rho<\delta \},\quad
S_{m,2}=\{(\rho, \varphi):\, 0< \varphi<\varphi_B, \  \rho<\delta/2 \}.
$$
The following pointwise bounds hold in these domains:
$$
|{\rm div} (r\sigma_\delta\nabla\tilde \theta_\delta)|=
\begin{cases}
O(\delta^{-1}\log(1/\delta)) \quad \text{in}\ S_{i,1}\ \text{and}\  S_{e,1}\\
O(\delta \rho^{\alpha_\delta \delta-1}) \quad \text{in}\ S_{i,2}\ \text{and}\  S_{e,2} \\
O(1)\quad \text{in}\ S_{m,1}\\
O(\delta^3\rho^{\alpha_\delta\delta-2}) \quad \text{in}\ S_{m,2}.
\end{cases}
$$
Thus,
$$
\int_{D_{\delta}(B)\setminus \Gamma^\prime} {\rm div} (r\sigma_\delta\nabla\tilde \theta_\delta) \theta_\delta drdx_1 =\int_{S_{i,1}\cup S_{e,1}} {\rm div} (r\sigma_\delta\nabla\tilde \theta_\delta) \theta_\delta drdx_1 +O(\delta^2).
$$
Observe that $\theta_\delta$ on $S_{i,1}$ and $S_{e,1}$ is sufficiently close to its mean values  over
$Y_i$ and $Y_e$,
\begin{equation*}
\tau_i:=\frac{1}{|Y_i|}\int_{Y_i}\theta_\delta dx=1+O(\delta^{1/2}) \quad\text{and}\ \tau_e=\frac{1}{|Y_i|}\int_{Y_i}\theta_\delta dx=0,
\end{equation*}
 correspondingly. Namely, by Hardy's inequality
\begin{align}
 \int_{S_{i,1}}|\theta_\delta-\tau_i|^2drdx_1\leq C\delta^2\log(1/\delta) \int_{Y_i}|\nabla \theta_\delta|^2dx\leq C_1 \delta^3 \log(1/\delta), \\
 \quad 
 \int_{S_{e,1}}|\theta_\delta|^2drdx_1\leq  C_2\delta^3 \log(1/\delta).
\end{align}
This leads to the following 
$$
\int_{D_{\delta}(B)\setminus \Gamma^\prime} {\rm div} (r\sigma_\delta\nabla\tilde \theta_\delta) \theta_\delta drdx_1 =
\tau_i \int_{S_{i,1}} {\rm div} (r\sigma_\delta\nabla\tilde \theta_\delta)drdx_1 + O\left(\sqrt{\delta^3\log^3 (1/\delta)}\right).
$$
It remains to calculate the integral in the right hand side integrating by parts 
\begin{align}
\notag\int_{S_{i,1}} {\rm div} (r\sigma_\delta\nabla\tilde \theta_\delta)drdx_1  &=
-\frac{\alpha_\delta \delta \sigma_i  }{1-V_\delta}\int_{-\pi}^{0}(\delta/2)^{\alpha_\delta\delta}
r \frac{\cos(\alpha_\delta\delta (\varphi+\pi))}{\cos(\alpha_\delta\delta \pi)}
d\varphi\\
&+
\frac{\alpha_\delta  \delta \sigma_i r_0}{1-V_\delta}\int_{\delta/2}^\delta  \tan(\alpha_\delta \delta \pi) \chi(\rho/\delta) \frac{\rho^{\alpha_\delta\delta}d\rho}{\rho}\\
&
=-\frac{\alpha_\delta \delta \sigma_i }{1-V_\delta}(\delta/2)^{\alpha_\delta\delta} r_0\pi +O(\delta^2)\\
&
=-r_0\delta	\left(\frac{\varphi_B}{\sigma_e(\pi-\varphi_B)}+\frac{\varphi_B}{\sigma_i \pi}\right)^{-1/2}+O(\delta^2\log(1/\delta)).
\label{nuvrodevychislilos}
\end{align}
This completes the proof of the Lemma.
%	As in the proof of  Lemma \ref{Pervaya_zamechatelnaya_lemma} we assume that 
%	the boundary of $Y_m^\prime$ is formed by two straight rays in the neighborhood of points $A$ and 
%	$B$.  Then fluxes of the test function $\theta_\delta^A$ constructed in Lemma \ref{Pervaya_zamechatelnaya_lemma} are  continuous across $\partial Y_m^\prime$
%	(locally, near the point $A$). This allows to write
\end{proof}	

Next we show that, $\theta_\delta $ being normalized by \eqref{th_normaliz}, one has	
\begin{equation}
\label{nasha_svetlaya_tsel}
\frac{1}{\sqrt{\delta}}\sigma_\delta \nabla \theta_\delta \rightharpoonup 0 \quad\text{weakly in} \ L^2(Y). 
%{\rm div}\left(\sigma_\delta \nabla \theta_\delta\right)  =0 \quad  \text{in}Y\setminus \Gamma,
%\quad \left(\sigma_\delta \frac{\partial (\theta_\delta)}{\partial \nu}\right)_{+} = \left(\sigma_\delta \frac{\partial (\theta_\delta)}{\partial \nu}\right)_{-} =\lambda_\ve [\theta_\delta] .
%\label{rivnyannya} 
\end{equation}
To this end we use the test function $\tilde \theta_\delta$ constructed in the proof of Lemma 
\ref{Pervaya_zamechatelnaya_lemma} to write
\begin{equation}
%\frac{1}{\|\tilde\theta_\delta\|^2_{L^2(\Gamma)}}
\int_Y\sigma_\delta |\nabla \theta_\delta|^2 dx-
\int_Y\sigma_\delta |\nabla \theta_\delta|^2 dx  \leq C\sqrt {\delta^3\log^3(1/\delta)},
\label{Energetichotsenka}
\end{equation}
where we have used Lemma 
\ref{Vtoraya_zamechatelnaya_lemma} together with the fact that $\theta_\delta$ minimizes \eqref{MinPrlTheta},  and calculations from the proof of Lemma \ref{Pervaya_zamechatelnaya_lemma}. Representing  $\tilde\theta_\delta$ as 
$\tilde\theta_\delta=(\tilde\theta_\delta-\theta_\delta)+\theta_\delta$ and expanding the left hand side of 
\eqref{Energetichotsenka} we get
\begin{align}
\notag
%\frac{1}{\|\tilde\theta_\delta\|^2_{L^2(\Gamma)}}
\int_Y\sigma_\delta |\nabla \tilde\theta_\delta-\nabla\theta_\delta|^2 dx
&\leq C\sqrt {\delta^3\log^3(1/\delta)}-2
\int_Y\sigma_\delta \nabla \theta_\delta \cdot  \nabla(\tilde\theta_\delta-\theta_\delta) dx
\\
&= C\sqrt {\delta^3\log^3(1/\delta)}+2\lambda_\delta
\int_\Gamma [\theta_\delta]  \,[\tilde\theta_\delta-\theta_\delta] dx\\
&\leq C_1\sqrt {\delta^3\log^3(1/\delta)},
\label{Energetichotsenka11}
\end{align}
where we have used \eqref{Nuochenochevidno} to derive the last inequality. Straightforward calculations 
show that $\frac{1}{\sqrt{\delta}}\sigma_\delta \nabla \tilde \theta_\delta \rightharpoonup 0$, which in conjunction 
with  \eqref{Energetichotsenka11} yields \eqref{nasha_svetlaya_tsel}. 

We  summarise properties of 
$\theta_\delta$ in 
\begin{lemma} Let the minimizer $\theta_\delta$ of  \eqref{MinPrlTheta} be normalized by 
\eqref{th_normaliz} then
\begin{enumerate}[(i)]
\item
$
\|\theta_\delta\|_{L^\infty(Y)}\leq C.
$
\vskip 0.2cm
\item
$
\theta_\delta \to \begin{cases}
1 \quad\text{strongly in} \ L^2(Y_i)\\
0 \quad\text{strongly in} \ L^2(Y_e).
\end{cases}
$
\vskip 0.2cm
\item
$
\displaystyle
\frac{1}{\sqrt{\delta}}\sigma_\delta \nabla \theta_\delta \rightharpoonup 0 \quad\text{weakly in} \ L^2(Y). 
%{\rm div}\left(\sigma_\delta \nabla \theta_\delta\right)  =0 \quad  \text{in}Y\setminus \Gamma,
%\quad \left(\sigma_\delta \frac{\partial (\theta_\delta)}{\partial \nu}\right)_{+} = \left(\sigma_\delta \frac{\partial (\theta_\delta)}{\partial \nu}\right)_{-} =\lambda_\ve [\theta_\delta] .
%\label{rivnyannya} 
$
\vskip 0.2cm
\item
$
[\theta_\delta]\to 1\quad \text{strongly in} \ L^2(\Gamma).
$
\end{enumerate}
\label{Samaya_itogovaya_lemma}
%(ii) Under the normalization 
%\eqref{th_normaliz_new}
%\begin{equation}
%	\label{neabysho}
%	\frac{1}{\delta^{1/2}}(\theta_\delta-1) \rightharpoonup 0\quad\text{weakly in}\ H^1(Y_i),\quad 	\frac{1}{\delta^{1/2}}\theta_\delta \rightharpoonup 0 \quad\text{weakly in}\ H^1(Y_e).
%\end{equation}
\end{lemma}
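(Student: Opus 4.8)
The plan is to notice that two of the four claims are already in hand. Item (i) is precisely the uniform estimate \eqref{nuyakrutoi} established in Lemma~\ref{Pervaya_zamechatelnaya_lemma}(ii) by the Moser-type iteration \eqref{iterative} on $\Gamma_i\cup\Gamma_e$ followed by the maximum principle, and item (iii) is exactly \eqref{nasha_svetlaya_tsel}, obtained from the energy comparison \eqref{Energetichotsenka11} with the explicit test function $\tilde\theta_\delta$. So the real task is to upgrade the weak convergences \eqref{abysho} to the strong $L^2$-convergences of (ii) and to deduce (iv).

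First I would record the energy identity coming from the fact that $\theta_\delta$ attains the infimum in \eqref{MinPrlTheta}: together with the normalization \eqref{th_normaliz},
$\int_Y\sigma_\delta|\nabla\theta_\delta|^2\,dx=\lambda_\delta\int_\Gamma[\theta_\delta]^2\,ds=\lambda_\delta|\Gamma|$, so by \eqref{och_hor_bound} one has $\sigma_i\int_{Y_i}|\nabla\theta_\delta|^2\,dx+\sigma_e\int_{Y_e}|\nabla\theta_\delta|^2\,dx\le\Lambda|\Gamma|\,\delta\to0$. Hence $\nabla\theta_\delta\to0$ strongly in $L^2(Y_i)$ and in $L^2(Y_e)$. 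Applying the Poincar\'e--Wirtinger inequality in each of the connected Lipschitz subdomains $Y_i$, $Y_e$ gives $\theta_\delta-\langle\theta_\delta\rangle_{Y_i}\to0$ in $H^1(Y_i)$ and $\theta_\delta-\langle\theta_\delta\rangle_{Y_e}\to0$ in $H^1(Y_e)$, where $\langle\cdot\rangle$ denotes the mean value. By \eqref{th_normaliz} we have $\langle\theta_\delta\rangle_{Y_e}=0$, so $\theta_\delta\to0$ strongly in $H^1(Y_e)$, in particular in $L^2(Y_e)$.

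It remains to identify $\langle\theta_\delta\rangle_{Y_i}$, and this is where the surface normalization $\int_\Gamma[\theta_\delta]^2\,ds=|\Gamma|$ is used. Taking traces on $\Gamma\subset\partial Y_i\cap\partial Y_e$ and using the boundedness of the trace operator together with the $H^1$-convergences just obtained, $\theta_\delta^e|_\Gamma\to0$ in $L^2(\Gamma)$ and $\theta_\delta^i|_\Gamma-\langle\theta_\delta\rangle_{Y_i}\to0$ in $L^2(\Gamma)$; subtracting yields $[\theta_\delta]-\langle\theta_\delta\rangle_{Y_i}\to0$ in $L^2(\Gamma)$. Squaring and integrating over $\Gamma$ gives $|\Gamma|=\int_\Gamma[\theta_\delta]^2\,ds=\langle\theta_\delta\rangle_{Y_i}^2\,|\Gamma|+o(1)$, so $\langle\theta_\delta\rangle_{Y_i}\to\pm1$; the sign constraint $\int_{Y_i}\theta_\delta\,dx\ge0$ from \eqref{th_normaliz} excludes $-1$, hence $\langle\theta_\delta\rangle_{Y_i}\to1$. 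Consequently $\theta_\delta\to1$ strongly in $H^1(Y_i)$ (thus in $L^2(Y_i)$), which is (ii), and $[\theta_\delta]\to1$ strongly in $L^2(\Gamma)$, which is (iv). Since each of the four limits is uniquely determined, the convergences hold for the whole family $\delta\to0$, not merely along subsequences.

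I do not anticipate a serious obstacle: this is the standard ``gradient tends to zero $+$ Poincar\'e $+$ trace continuity'' compactness chain. The only point requiring a little care is the bookkeeping that transfers the normalization on $\Gamma$ into the identification of the bulk limit $1$ on $Y_i$ --- one must make sure the $L^2(\Gamma)$-closeness of $[\theta_\delta]$ to the constant $\langle\theta_\delta\rangle_{Y_i}$ is quantitative enough (it is $O(\delta^{1/2})$ by the estimates above) to pass to the limit in $\int_\Gamma[\theta_\delta]^2\,ds$, and to keep track of the sign condition when extracting $+1$ rather than $-1$.
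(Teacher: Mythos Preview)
Your proposal is correct and follows essentially the same route as the paper. The paper presents this lemma as a summary with no standalone proof: (i) is \eqref{nuyakrutoi}, (iii) is \eqref{nasha_svetlaya_tsel}, and for (ii), (iv) the paper simply remarks that the convergences ``easily follow from the bound \eqref{och_hor_bound} and the Poincar\'e inequality'' (proof of Lemma~\ref{Pervaya_zamechatelnaya_lemma}(ii)) and that $\|[\theta_\delta]-1\|_{L^2(\Gamma')}^2\le C\delta$ (proof of Lemma~\ref{Vtoraya_zamechatelnaya_lemma}); your energy identity $+$ Poincar\'e--Wirtinger $+$ trace argument, together with the use of the normalization \eqref{th_normaliz} to pin down the constant $+1$, is exactly a fleshed-out version of these remarks.
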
 	

\begin{lemma}
\label{lm:about-theta}
The rescaled function $\ttheta$ has the following properties:
\begin{enumerate}[(i)]
\item
$\ttheta$ converges to $1$ strongly in $L^2(\Omega_{i, \ve})$ and to $0$ strongly in $L^2(\Omega_{e, \ve})$:
\begin{align*}
\ve^{-2}\int_{\Omega_{i, \ve}} |\ttheta - 1|^2 dx \to 0, \quad \ve \to 0,\\
\ve^{-2}\int_{\Omega_{e, \ve}} |\ttheta|^2 dx \to 0, \quad \ve \to 0.
\end{align*}
\item
$\ttheta$ converges strongly in $L^2(\Gamma_\ve)$ to $1$:
\begin{align*}
\ve^{-1}\int_{\Gamma_\ve} |\ttheta - 1|^2 ds \to 0, \quad \ve \to 0.
\end{align*}
\item
$\displaystyle \ve^{-1} \sigma_\ve \nabla_y \ttheta$ converges weakly two-scale in $L^2(\Omega_{i, \ve} \cup \Omega_{e, \ve})$ to $0$. 
\vskip 0.2cm
\item
$\|\nabla_y\ttheta\|_{L^2(\Omega_{m, \ve})} \le C.$
\end{enumerate}
\end{lemma}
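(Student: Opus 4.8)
The plan is to transfer the $\delta\to0$ statements of Lemmas~\ref{Pervaya_zamechatelnaya_lemma} and \ref{Samaya_itogovaya_lemma} to the thin cylinder by a cell-by-cell change of variables, with the choice $\delta=\ve^2$ (so that $\sigma_\delta$ equals $\ve^4$ on $Y_m$, hence $\sigma_\ve(x)=\sigma_{\ve^2}(x/\ve)$). Since $\theta_{\ve^2}$ is $1$-periodic in $y_1$, the rescaled function $\ttheta$ is well defined on $\Omega_\ve$, and $\Omega_{l,\ve}$ and $\Gamma_\ve$ are disjoint unions of $L/\ve$ translated scaled copies of $\ve Y_l$ and $\ve\Gamma$; on each such copy the substitution $x=\ve y$ multiplies a volume integral by $\ve^3$ and a surface integral by $\ve^2$, and turns $\theta_{\ve^2}(x/\ve)$, $(\nabla_y\theta_{\ve^2})(x/\ve)$, $\sigma_\ve(x)$ into $\theta_{\ve^2}(y)$, $(\nabla_y\theta_{\ve^2})(y)$, $\sigma_{\ve^2}(y)$. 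I shall also use the energy identity for the minimizer normalized by \eqref{th_normaliz}, namely $\int_Y\sigma_\delta|\nabla\theta_\delta|^2\,dx=\lambda_\delta\int_\Gamma[\theta_\delta]^2\,ds=\lambda_\delta|\Gamma|$, together with $\lambda_\delta\le\Lambda\delta$ from Lemma~\ref{Pervaya_zamechatelnaya_lemma}(i); these give $\|\nabla\theta_{\ve^2}\|_{L^2(Y_i\cup Y_e)}=O(\ve)$ and $\|\nabla\theta_{\ve^2}\|_{L^2(Y_m)}=O(\ve^{-1})$.

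For (i), the change of variables gives $\ve^{-2}\int_{\Omega_{i,\ve}}|\ttheta-1|^2\,dx=L\int_{Y_i}|\theta_{\ve^2}-1|^2\,dy$, which tends to $0$ by Lemma~\ref{Samaya_itogovaya_lemma}(ii) because $\delta=\ve^2\to0$; the bound on $\Omega_{e,\ve}$ is identical, using $\theta_\delta\to0$ in $L^2(Y_e)$. For (ii), the same computation on the Ranvier surfaces yields $\ve^{-1}\int_{\Gamma_\ve}|[\ttheta]-1|^2\,ds=L\int_\Gamma|[\theta_{\ve^2}]-1|^2\,ds\to0$ by Lemma~\ref{Samaya_itogovaya_lemma}(iv); in fact the one-sided traces of $\ttheta$ on $\Gamma_\ve$ converge in the rescaled $L^2$-norm to $1$ from the $\Omega_{i,\ve}$ side and to $0$ from the $\Omega_{e,\ve}$ side, since $\theta_\delta\rightharpoonup1$ in $H^1(Y_i)$, $\theta_\delta\rightharpoonup0$ in $H^1(Y_e)$, and the trace embedding $H^{1/2}(\Gamma)\hookrightarrow L^2(\Gamma)$ is compact. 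For (iv), on $Y_m$ the conductivity is $\ve^4$, so $\int_{Y_m}|\nabla\theta_{\ve^2}|^2\,dy\le\ve^{-4}\int_Y\sigma_{\ve^2}|\nabla\theta_{\ve^2}|^2\,dy\le\ve^{-4}\Lambda|\Gamma|\ve^2$, and hence $\|\nabla_y\ttheta\|_{L^2(\Omega_{m,\ve})}^2=\frac{L}{\ve}\,\ve^3\int_{Y_m}|\nabla\theta_{\ve^2}|^2\,dy\le L\Lambda|\Gamma|=C$.

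The substantial point is (iii). First, $\ve^{-1}\sigma_\ve\nabla_y\ttheta=\ve^{-1}\sigma_{\ve^2}(x/\ve)\,(\nabla_y\theta_{\ve^2})(x/\ve)$ satisfies the uniform bound required for two-scale convergence, since $\ve^{-2}\int_{\Omega_{l,\ve}}|\ve^{-1}\sigma_\ve\nabla_y\ttheta|^2\,dx=L\,\|\ve^{-1}\sigma_{\ve^2}\nabla\theta_{\ve^2}\|_{L^2(Y_l)}^2$, which is bounded by Lemma~\ref{Samaya_itogovaya_lemma}(iii). To identify the limit as $0$, I would directly compute the pairing against a test function $\phi(t,x_1)\psi(y)$ with $\phi$ smooth (enough by density together with the uniform bound) and $\psi\in L^2(Y_l)^3$: writing $\Phi(x_1)=\int_0^T\phi(t,x_1)\,dt$ and changing variables cell by cell, the pairing $\ve^{-2}\int_0^T\int_{\Omega_{l,\ve}}\ve^{-1}\sigma_\ve\nabla_y\ttheta\cdot\phi\,\psi(x/\ve)\,dx\,dt$ reduces to $\big(\textstyle\sum_k\Phi(\ve k)\big)\int_{Y_l}\sigma_{\ve^2}\nabla\theta_{\ve^2}\cdot\psi\,dy$ plus an error bounded by $\tfrac{L}{\ve}\,\omega_\Phi(\ve)\,\|\sigma_{\ve^2}\nabla\theta_{\ve^2}\|_{L^2(Y_l)}\|\psi\|_{L^2(Y_l)}$, with $\omega_\Phi$ the modulus of continuity of $\Phi$. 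Here $\sum_k\Phi(\ve k)=O(\ve^{-1})$ (a Riemann sum), $\|\sigma_{\ve^2}\nabla\theta_{\ve^2}\|_{L^2(Y_l)}=O(\ve)$, and, crucially, $\int_{Y_l}\sigma_{\ve^2}\nabla\theta_{\ve^2}\cdot\psi\,dy=\ve\int_{Y_l}\big(\ve^{-1}\sigma_{\ve^2}\nabla\theta_{\ve^2}\big)\cdot\psi\,dy=o(\ve)$ by the \emph{weak} convergence in Lemma~\ref{Samaya_itogovaya_lemma}(iii); thus both contributions vanish in the limit, which is (iii).

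The only genuine difficulty I anticipate is this balance of powers of $\ve$ in (iii): one must use that $\ve^{-1}\sigma_{\ve^2}\nabla\theta_{\ve^2}$ not merely stays bounded but converges \emph{weakly} to $0$, so that pairing it with the (after rescaling, cell-independent) profile $\psi$ produces the extra $o(1)$ needed to defeat the $O(\ve^{-1})$ coming from the $L/\ve$ cells. Everything else is a routine rescaling of the cell estimates already established.
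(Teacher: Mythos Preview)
Your proof is correct and follows essentially the same route as the paper: cell-by-cell rescaling of the $\delta$-statements in Lemma~\ref{Samaya_itogovaya_lemma} with $\delta=\ve^2$, using the energy bound $\int_Y\sigma_\delta|\nabla\theta_\delta|^2\le\Lambda\delta|\Gamma|$ for (iv). The paper in fact treats (iii) in a single line (``follows directly from (iii) in Lemma~\ref{Samaya_itogovaya_lemma}''), so your explicit splitting into a Riemann-sum factor times a cell integral of order $o(\ve)$, plus a modulus-of-continuity remainder, is a welcome clarification rather than a different argument.
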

\begin{proof}
\begin{enumerate}[(i)]
\item
Let us prove the convergence in $\Omega_{i, \ve}$. Writing $\Omega_{i, \ve}$ as a union $\cup_k (\ve Y_k)$, rescaling and applying Lemma \ref{Samaya_itogovaya_lemma} we have
\begin{align*}
\ve^{-2}\int_{\Omega_{i, \ve}} |\ttheta - 1|^2 dx = \ve^{-2}\sum_k \int_{\ve Y_{i,k}} |\ttheta - 1|^2 dx \\
=  \ve^{-2}\sum_k \ve^3 \int_{Y_{i,k}} |\theta_\delta - 1|^2 dy =o(1), \quad \ve \to 0.
\end{align*}
The convergence in $\Omega_{e, \ve}$ is proved in the same way.

\item
Similar arguments as above yield
\begin{align*}
\ve^{-1}\int_{\Gamma_\ve} |\ttheta - 1|^2 dx
= \ve^{-1}\sum_k \int_{\ve \Gamma_k} |\ttheta - 1|^2 dx\\ 
= \ve^{-1}\sum_k \ve^2 \int_{\Gamma_k} |\theta_\delta - 1|^2 dy
= o(1), \quad \ve \to 0.
\end{align*}
\item
The convergence to zero follows directly from $(iii)$ in Lemma  \ref{Samaya_itogovaya_lemma}.
\item
Combining (\ref{MinPrlTheta}) and (\ref{och_hor_bound}) one can see that 
\begin{align*}
\int_{Y_m} |\nabla\theta_\delta|^2 dy \le C\delta^{-1}.
\end{align*}
Writing $\Omega_{m, \ve}$ as a union $\cup_k (\ve Y_m)$, rescaling and setting $\delta=\ve^2$ we obtain
\begin{align*}
\int_{\Omega_{m, \ve}} |\nabla_y \ttheta |^2 dx = \sum_k \int_{\ve Y_{m,k}} |\nabla_y \ttheta |^2 dx \\
= \sum_k \ve^3 \int_{Y_{m,k}} |\nabla \theta_\delta(y)|^2 dy \le C.
\end{align*}
\end{enumerate}
\end{proof}

%%%
%\label{th_normaliz}
%	\int_{\Gamma}[\theta_\delta]^2 ds=|\Gamma|, \  \int_{Y_e}\theta_\delta dx=0 \quad\text{and}\ \int_{Y_i}\theta_\delta dx\geq 0, 
%	\end{equation}	
%Choosing properly these constants for sufficiently small $\delta$ we can fix a minimizer $\theta_\delta$ normalized by 
%\begin{equation}
%\int_{Y_{+}} \theta_\delta dx=1,\quad  \int_{Y_{-}} \theta_\delta dx=0.
%\label{nomalizat} 
%\end{equation}
%It satisfies 
%\begin{equation}
%{\rm div}\left(\sigma_\delta \nabla \theta_\delta\right)  =0 \quad  \text{in}Y\setminus \Gamma,
%\quad \left(\sigma_\delta \frac{\partial (\theta_\delta)}{\partial \nu}\right)_{+} = \left(\sigma_\delta \frac{\partial (\theta_\delta)}{\partial \nu}\right)_{-} =\lambda_\ve [\theta_\delta] .
%\label{rivnyannya} 
%\end{equation}
%Moreover, thanks to the radial symmetry $\theta_\delta =\theta_\delta(r,x_1)$ and
%\begin{equation}
%\label{MinPrlThetaRadial} 
%\lambda_\delta =\frac{\displaystyle\int_{Y^\prime} \sigma_\delta |\nabla \theta_\delta|^2\,  r drdx_1}{\displaystyle\int_{{r_0}\times (a,b) }[\theta_\delta]^2 r_0 d x_1 }.
%\end{equation}

%%%%%%%%%%%%%%%%%%%%%%%%%%%%%%%%%%%%%%%%%%%%%%%%%

\section{Justification of macroscopic model}
\label{sec:pass-to-limit}

Let us denote $v_\ve=[u_\ve]$. Using Lemmata \ref{lm:convergence-1}, \ref{lm:piecewise-const-v} and \ref{lm:about-theta}, we will pass to the limit in the weak formulation of (\ref{eq:1-1})-(\ref{eq:1-7}):
\begin{align}
\label{eq:weak-pass-to-limit}
\ve^{-1} \int_0^T (c_m \partial_t v_\ve+ I_{ion}(v_\ve, \langle g_\ve,v_\ve\rangle)) [\phi]\, dx dt +
\ve^{-2} \int_0^T \int_{\Omega_\ve\setminus \Gamma_\ve} \sigma_\ve \nabla u_\ve \cdot \nabla \phi \, dx dt =0, 
\end{align}
where $\phi(t,x) \in L^\infty(0,T; H^1(\Omega_\ve \setminus \Gamma_\ve))$ such that $\phi=0$ for $x_1=0$ and $x_1=L$.

For $U_i(t,x_1), U_e(t,x_1) \in C(0,T; H^1(0,L))$ and $U_1(t, x_1,y) \in C(0,T; H^1((0,L)\times Y))$ we construct the following test function:
\begin{align*}
\phi_\ve(t,x) = (U_i(t,x_1) \ttheta + (1-\ttheta)(U_e(t,x_1) + \ve U_1\big(t, x_1, \frac{x}{\ve}\big) ),
\end{align*}
where $\ttheta$ is the auxiliary function introduced in Section \ref{sec:aux}.

Note that due to the strong convergence of the jump of $\ttheta$ (see $(iv)$ Lemma \ref{Samaya_itogovaya_lemma}), the jump of $\phi_\ve$ on $\Gamma_\ve$ converges strongly in $L^2(\Gamma_\ve)$ to $U_i(t,x_1)-U_e(t,x_1)$.
Substituting $\phi_\ve$ into (\ref{eq:weak-pass-to-limit}) we get
\begin{align}
\label{eq:limit-1}
\ve^{-1} &\int_0^T \int_{\Gamma_\ve} (c_m \partial_t v_\ve + I_{ion}(v_\ve, \langle g_\ve,v_\ve\rangle)) [\phi_\ve]\, ds dt \\
\label{eq:limit-2}
+ \ve^{-2} &\int_0^T \int_{\Omega_\ve\setminus \Gamma_\ve} \sigma_\ve \nabla u_\ve \cdot (\ttheta \mathbf{e_1} \partial_{x_1} U_i  + \ve^{-1}U_i \nabla_y \ttheta)\, dx dt\\
\label{eq:limit-3}
+ \ve^{-2}& \int_0^T \int_{\Omega_\ve\setminus \Gamma_\ve} \sigma_\ve \nabla u_\ve \cdot (1-\ttheta)(\mathbf{e_1} \partial_{x_1} U_e + \nabla U_1\big(x_1, \frac{x}{\ve}\big))dx dt \\
 \label{eq:limit-4}
- \ve^{-2}& \int_0^T \int_{\Omega_\ve\setminus \Gamma_\ve} \sigma_\ve \nabla u_\ve \cdot  \ve^{-1} \nabla_y \ttheta (U_e + \ve U_1\big(x_1, \frac{x}{\ve}\big) )  \, dx dt \\
&=I_{1\ve} + I_{2\ve} +I_{3\ve} +I_{4\ve} =0, \nonumber
\end{align}
Let us pass to the limit, as $\ve \to 0$, in each integral $I_{k\ve}$, $k=1,2,3,4$ given by (\ref{eq:limit-1})-(\ref{eq:limit-4}).

Since $[\phi_\ve]$ on $\Gamma_\ve$ converges strongly in $L^2(\Gamma_\ve)$ to $U_i(t,x_1)-U_e(t,x_1)$ and $\partial_t v_\ve$ converges two-scale (weakly) in $L^2(0,T;L^2(\Gamma_\ve))$ and uniformly on $(0,T)$ to $v_0(t,x_1)$, we can pass to the limit in (\ref{eq:limit-1}) and obtain
\begin{align*}
I_{1\ve}  &= \ve^{-1} \int_0^T \int_{\Gamma_\ve} (c_m \partial_t v_\ve + I_{ion}(v_\ve, \langle g_\ve,v_\ve\rangle)) [\phi_\ve]\, ds dt \\
& \xrightarrow[\ve \to 0]{} \quad |\Gamma| \int_0^T \int_0^L (c_m \partial_t v_0 + I_{ion}(v_0, \langle g_0,v_0\rangle )) (U_i - U_e)\, dx_1 dt.
\end{align*} 
Integrating by parts the second integral in (\ref{eq:limit-2}) containing $\nabla_y \ttheta$ and using (\ref{rivnyannya}) and Lemma \ref{Vtoraya_zamechatelnaya_lemma}, we have
\begin{align*}
I_{2\ve} &=\ve^{-2}\int_0^T \int_{\Omega_\ve\setminus \Gamma_\ve} \sigma_\ve \partial_{x_1} u_\ve \, \ttheta \partial_{x_1} U_i \, dx dt \\
&\quad - \ve^{-3} \int_0^T \int_{\Omega_\ve\setminus \Gamma_\ve} \sigma_\ve u_\ve \partial_{y_1} \ttheta \partial_{x_1 x_1}^2 U_i \, dx dt \\
&\quad + \ve^{-3} \int_0^T \int_{\Gamma_\ve} \lambda_{\ve^2} [\ttheta] \, v_\ve\,  U_i\, ds dt \\
&\xrightarrow[\ve \to 0]{} \quad
\frac{|Y_i|}{|Y|}\int_0^T \int_0^L \sigma_i \partial_{x_1} u_0^i \, \partial_{x_1} U_i \, dx_1 dt
+ |\Gamma| \int_0^T \int_0^L  \overline \Lambda\, v_0\, U_i\, dx_1 dt. 
\end{align*}
To pass to the two-scale limit in (\ref{eq:limit-3}) we use (iv) in Lemma \ref{lm:convergence-1} and (i) in Lemma \ref{lm:about-theta} and get
\begin{align*}
I_{3\ve} &=\ve^{-2} \int_0^T \int_{\Omega_\ve\setminus \Gamma_\ve} \sigma_\ve \nabla u_\ve \cdot (1-\ttheta)(\mathbf{e_1} \partial_{x_1} U_e + \nabla_y U_1\big(x_1, \frac{x}{\ve}\big) +  \ve \partial_{x_1}U_1\big(x_1, \frac{x}{\ve}\big))dx dt \\
& \xrightarrow[\ve \to 0]{} \quad
\frac{1}{|Y|} \int_0^T \int_0^L \int_{Y_e} \sigma_e (\mathbf{e_1} \partial_{x_1} u_0^e + \nabla_y w^e) \cdot
(\mathbf{e_1} \partial_{x_1} U_e(t,x_1) + \nabla_y U_1(t,x_1,y))\, dy\, dx_1 dt. 
\end{align*}
Integrating by parts (\ref{eq:limit-4}), using (iii) in Lemma \ref{lm:about-theta}, the interface conditions for $\ttheta$ on $\Gamma_\ve$,  and Lemma \ref{Vtoraya_zamechatelnaya_lemma} yields
\begin{align*}
I_{4\ve}&=\ve^{-2}\int_0^T \int_{\Omega_\ve\setminus \Gamma_\ve} \sigma_\ve \nabla u_\ve \cdot  \ve^{-1} \nabla_y \ttheta (U_e + \ve U_1\big(x_1, \frac{x}{\ve}\big) )  \, dx dt \\
\\
& =-\ve^{-3} \int_0^T \int_{\Omega_\ve\setminus \Gamma_\ve} u_\ve \sigma_\ve \nabla_y \ttheta \cdot \nabla (U_e + \ve U_1\big(x_1, \frac{x}{\ve}\big) )  \, dx dt \\
&\quad  + \ve^{-3} \int_0^T \int_{\Gamma_\ve}\lambda_{\ve^2} [\ttheta] \, v_\ve (U_e + \ve U_1\big(x_1, \frac{x}{\ve}\big) ) \, ds dt\\
&\xrightarrow[\ve \to 0]{} \quad
- |\Gamma| \int_0^T \int_0^L  \overline \Lambda\, v_0\, U_e\, dx_1 dt
\end{align*} 
In this way we obtain a weak formulation of the effective  problem:
\begin{align*}
&|\Gamma| \int_0^T \int_0^L (c_m \partial_t v_0 + I_{ion}(v_0, \langle g_0,v_0\rangle )) (U_i - U_e)\, dx_1 dt\\
+& |\Gamma| \int_0^T \int_0^L  \overline \Lambda\, v_0\, (U_i -U_e)\, dx_1 dt\\
+& \frac{|Y_i|}{|Y|}\int_0^T \int_0^L \sigma_i \partial_{x_1} u_0^i \, \partial_{x_1} U_i \, dx_1 dt\\
+& \frac{1}{|Y|} \int_0^T \int_0^L \int_{Y_e} \sigma_e (\mathbf{e_1} \partial_{x_1} u_0^e + \nabla_y w^e) \cdot
(\mathbf{e_1} \partial_{x_1} U_e(t,x_1) + \nabla_y U_1(t,x_1,y))\, dy\, dx_1 dt=0.
\end{align*}
Computing consequently the variation of the left-hand side of the last equality with respect to $U_1, U_i$ and $U_e$ gives the representation $U_1(t,x_1,y)= N(y)\partial_{x_1} U_e(t,x_1)$, the cell problem (\ref{eq:cell-prob}) and the two one-dimensional equations
\begin{align}
\label{eq:hom-1}
&|\Gamma|(c_m \partial_t v_0 + I_{ion}(v_0, \langle g_0,v_0\rangle ) + \overline \Lambda\, v_0) = \frac{|Y_i|}{|Y|}\,  \sigma_i \partial_{x_1 x_1}^2 \, u_0^i,\\
\label{eq:hom-2}
&|\Gamma|(c_m \partial_t v_0 + I_{ion}(v_0, \langle g_0,v_0\rangle ) + \overline \Lambda\, v_0) = - \frac{1}{|Y|} \int_{Y_e} \sigma_e |\mathbf{e_1} + \nabla_y N|^2  \, \partial_{x_1 x_1}^2 u_0^e \,dy.
\end{align}
Introducing (\ref{eq:eff-coef}) and adding up (\ref{eq:hom-2}) and (\ref{eq:hom-1}) yield (\ref{eq:1D-eff-without-g}). The proof of Theorem \ref{th:main} is complete.

\section{Acknowledgements}
This research was supported by the Swedish Foundation for International Cooperation in Research and Higher Education
STINT (research grant IB 2017-7370).
%%%%%%%%%%%%%%%%%%%%%%%%%%%%%%%%%%%%%%%%%%%%%%%%%%%%%%%%%%%%
%\printbibliography

\end{document}